\newtheorem{theorem}{Theorem}
\newtheorem{definition}[theorem]{Definition}
\newtheorem{lemma}[theorem]{Lemma}
\newtheorem{proposition}[theorem]{Proposition}
\begin{document}

\begin{frontmatter}


\author[label1,label2]{Sh. Haghi}
\ead{sh.haghi@ipm.ir}
\author[label1,label2]{H. R. Maimani}
 \ead{maimani@ipm.ir}
 \author[label1,label2]{A. Seify}
 \ead{abbas.seify@gmail.com}
\address[label1]{Mathematics Section, Department of Basic Sciences, Shahid Rajaee Teacher Training University, P. O. BOX 16783-163,Tehran, Iran}
\address[label2]{School of Mathematics, Institute for Research in Fundamental Sciences (IPM), P. O. BOX 19395-5746, Tehran, Iran}

\title{Star-critical Ramsey number of $K_4$ versus $F_n$}




\begin{abstract}
For two graphs $G$ and $H$, the Ramsey number $r(G,H)$ is the smallest positive integer $r$, such that any red/blue coloring of the edges of graph $K_r$ contains either a red subgraph that is isomorphic to $G$ or a blue subgraph that is isomorphic to $H$. Let $S_k=K_{1,k}$ be a star of order $k+1$ and $K_n\sqcup S_k$ be a graph obtained from $K_n$ by adding a new vertex $v$ and joining $v$ to $k$ vertices of $K_n$. The star-critical Ramsey number $r_*(G, H)$ is the smallest positive integer
$k$ such that any red/blue coloring of the edges of graph $K_{r-1}\sqcup S_k$ contains either a red subgraph that is isomorphic to $G$ or a blue subgraph that is isomorphic to $H$, where $r=r(G,H)$.
In this paper, it is shown that $r_*(F_n,K_4)=4n+2$, where $n\geq{4}$.
\end{abstract}

\begin{keyword}
Ramsey number, Star-critical, Fan, Complete graph.

MSC: 05C55; 05D10


\end{keyword}

\end{frontmatter}


\section{Introduction and Background}
Let $G=(V(G), E(G))$ denote a finite simple graph on the vertex set $V(G)$ and the edge set $E(G)$. The \textit{order} of a graph $G$ is $|V(G)|$. The subgraph of $G$ \textit{induced} by $S\subseteq{V(G)}$, $G[S]$, is a graph with vertex set $S$ and two vertices of $S$ are adjacent in $G[S]$ if and only if they are adjacent in $G$. For a vertex $v \in V(G)$, we denote the set of all neighbors of $v$ by $N(v)$. For the subset $S\subseteq{V(G)}$, $N(S)=\bigcup_{s\in{S}}N(s)$. 
The degree of a vertex $v$ in $G$ is denoted by $d_G(v)$ (for abbreviation $d(v)$). 
The graph $G-H$ is the subgraph of $G$ obtaining from the deletion
of the vertices of $H$ where $H$ is a subgraph of $G$.
\\
The cycle of order $n$ is denoted by $C_n$. We refer to a cycle of odd order as an odd cycle. We denote the complete graph on $r$ vertices by $K_r$. By \textit{triangle}, we refer to the complete graph $K_3$. A \textit{clique} is a subset of vertices of a graph, such that its induced subgraph is complete. The number of vertices of a largest clique in a graph $G$ is denoted by $\omega(G)$. The \textit{fan} graph, $F_n$, can be constructed by joining $n$ copies of the complete graph $K_3$ with a common vertex. We refer to this common vertex as the \textit{center} vertex of the fan graph. For any graph $G$ and positive integer $n$, the disjoint union of $n$ copies of $G$ is denoted by $nG$.
\\
For a positive integer $k$, a complete $k$-partite graph is a graph that can be partitioned into $k$ disjoint sets, such that no two vertices within the same set are adjacent, but every pair of vertices from two different sets are adjacent. Now, let $G$ be a complete $k$-partite graph with disjoint sets $A_1,\ldots, A_k$ which we sometimes write $G=(A_1,\ldots, A_k)$, then the graph $G$ is denoted by $K_{|A_1|,\ldots, |A_k|}$.
\\
For a red/blue edge-coloring of a graph $G$, the subgraph induced by red edges is denoted by $G^R$ and the subgraph induced by blue edges is denoted by $G^B$. we denote the set of all neighbors of $v$ in $G^R$ by $N^R(v)$ and in $G^B$ by $N^B(v)$. We sometimes refer to a vertex $u\in N^B(v)$ (or $u\in N^R(v)$) as a blue neighbor (or red neighbor) of $v$. The number of all blue neighbors of $v$ in $G$ is denoted by $d^B(v)$. In the other words, $d^B(v)=|N^B(v)|$. We define $d^R(v)$ similarly. 
 Let $H$ be a subgraph of $G$ or a subset of $V(G)$ and $v\in{V(G)}$. The set $N_H(v)$ is the set of neighbors of $v$ in $H$. In the other words, $N_H(v)=N(v)\cap H$. Also, we define the sets $N_H^B(v)=N^B(v)\cap H$ and $N_H^R(v)=N^R(v)\cap H$. The $d_H^B(v)$ and $d_H^R(v)$ are defined similarly.
\\
Let $S_k=K_{1,k}$ be a \textit{star} of order $k+1$ and $K_n\sqcup S_k$ be a graph obtained from $K_n$ by adding a new vertex $v$ and joining $v$ to $k$ vertices of $K_n$.
\\
Let $G$ and $H$ be two graphs. The \textit{Ramsey number} of $G$ and $H$ is the smallest positive integer $r$ such that every red/blue coloring of $K_r$ contains a red $G$ or a blue $H$. Note that for $r=r(G,H)$, there exists a critical red/blue edge-coloring graph $K_{r-1}$ which contains no monochromatic subgraph isomorphic to $G$ or $H$. We call such a red/blue edge-coloring a $(G,H)$-free coloring and the complete graph, $K_{r-1}$, with a $(G,H)$-free coloring as a $(G,H)$-free graph.
\\
For two graphs $G$ and $H$, the \textit{star-critical Ramsey number}, $r_*(G,H)$, is defined to be the smallest
integer $k$ such that every red/blue edge-coloring of $K_{r-1}\sqcup S_k$ contains a red $G$ or a blue $H$, where $r=r(G,H)$.
\\
The star-critical Ramsey number was defined by Hook and Isaak in \cite{H}. The following theorem was Shown by Zhen Li and Yusheng Li in \cite{Z}.

\begin{theorem}
Let $n\geq{2}$ be an integer. Then $r_*(F_n, K_3)=2n+2$.
\end{theorem}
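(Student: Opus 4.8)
The plan is to establish the two inequalities $r_*(F_n,K_3)\ge 2n+2$ and $r_*(F_n,K_3)\le 2n+2$, after first pinning down $r:=r(F_n,K_3)=4n+1$ (so that the relevant host graph is $K_{4n}\sqcup S_k$). The lower bound $r\ge 4n+1$ comes from coloring $K_{4n}$ so that the blue graph is the complete bipartite graph $K_{2n,2n}$ with parts $A,B$ and the red graph is $2K_{2n}$: the blue graph is bipartite, hence triangle-free, and each red component has only $2n<2n+1=|V(F_n)|$ vertices, so there is no red $F_n$. For $r\le 4n+1$ I would use the following degree lemma, which is also the engine of the whole argument: in any $(F_n,K_3)$-free coloring, every vertex $u$ has $d^R(u)\le 2n$. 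Indeed, if $|N^R(u)|\ge 2n+1$, take a maximum red matching in $G^R[N^R(u)]$; if it has at least $n$ edges then $u$ together with $2n$ of the matched endpoints forms a red $F_n$, while if it has at most $n-1$ edges then at least three vertices of $N^R(u)$ are unmatched and pairwise non-red, i.e. they form a blue $K_3$. Hence $d^B(u)\ge 2n-1$ for all $u$; on $K_{4n+1}$ this, together with the Andr\'asfai--Erd\H{o}s--S\'os theorem (threshold $\tfrac{2}{5}(4n+1)<2n$ for $n\ge 2$), forces the triangle-free blue graph to be bipartite with parts of sizes $2n$ and $2n+1$, so the larger part is an all-red $K_{2n+1}\supseteq F_n$, a contradiction. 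Thus $r=4n+1$.

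Next I would determine the structure of the critical colorings. On $K_{4n}$ the degree lemma gives $d^B(u)\ge 2n-1$ for every vertex, and for $n\ge 3$ the Andr\'asfai--Erd\H{o}s--S\'os theorem (now with threshold $\tfrac{8n}{5}<2n-1$) shows the blue graph is bipartite with parts $X,Y$. Each part is blue-independent, hence a red clique, so $|X|,|Y|\le 2n$ (otherwise a red $K_{2n+1}$ contains $F_n$); with $|X|+|Y|=4n$ this forces $|X|=|Y|=2n$. Since each vertex has blue degree $\ge 2n-1$ into the opposite part of size $2n$, it misses at most one vertex across, so the red edges between $X$ and $Y$ form a partial matching $M$. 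Hence, for $n\ge 3$, every $(F_n,K_3)$-free coloring of $K_{4n}$ is $G^B=K_{2n,2n}-M$ with $G^R=(K_{2n}\text{ on }X)\cup(K_{2n}\text{ on }Y)\cup M$, and one checks directly that each such coloring is indeed $(F_n,K_3)$-free.

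For the lower bound I attach $v$ to all $2n$ vertices of $X$ with blue edges and to a single vertex of $Y$ with a red edge, on the base with $M=\varnothing$; this uses $2n+1$ edges and creates neither a blue $K_3$ (the blue edges at $v$ all land in the red clique $X$) nor a red $F_n$ (no vertex acquires $2n$ red neighbors admitting a perfect matching), so $r_*\ge 2n+2$. For the upper bound I attach $v$ with $2n+2$ edges to an arbitrary base. The blue neighborhood $N^B(v)$ must be blue-independent, hence a red clique; a red clique lies inside one part or is a single cross-matching edge, so $|N^B(v)|\le 2n$. Moreover $v$ has at most one red neighbor in each part: if $p,q\in X$ were both red-adjacent to $v$, then centering at $p$ the set $\{v\}\cup(X\setminus\{p\})$ consists of $2n$ red neighbors of $p$ with the red perfect matching $\{vq\}\cup(\text{a perfect matching of }X\setminus\{p,q\})$, a red $F_n$. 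Hence $|N^R(v)|\le 2$, so the $2n+2$ edges force $|N^B(v)|=2n$ together with one red neighbor in each part; but $|N^B(v)|=2n$ means $N^B(v)$ is an entire part, leaving no vertex there to be a red neighbor, a contradiction. Therefore $K_{4n}\sqcup S_{2n+2}$ always contains a red $F_n$ or a blue $K_3$, giving $r_*\le 2n+2$.

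The whole argument hinges on the blue graph being bipartite, which Andr\'asfai--Erd\H{o}s--S\'os supplies only for $n\ge 3$; the genuinely delicate case is $n=2$, where the minimum-degree threshold is not met and a priori non-bipartite $(F_2,K_3)$-free colorings of $K_8$ (triangle-free with minimum blue degree $\ge 3$) might exist. The hard part is thus to show that bipartiteness persists at $n=2$: I would enumerate the triangle-free graphs on $8$ vertices with minimum degree $\ge 3$ and verify that every non-bipartite candidate fails to be $(F_2,K_3)$-free---for instance, the blow-ups of $C_5$ with parts summing to $8$ can be shown to always leave a red bowtie $F_2$ in the complement---so that once more $G^B=K_{4,4}-M$. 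Once bipartiteness is restored, the two preceding paragraphs apply verbatim to settle $n=2$, completing the proof.
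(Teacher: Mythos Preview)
The paper does not prove this statement: Theorem~1 is simply quoted from Li and Li \cite{Z} as background for the $K_4$ result, and no argument is supplied. There is therefore no proof in the present paper to compare your proposal against.

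On its own merits, your argument is essentially correct for $n\ge 3$. The degree lemma $d^R(u)\le 2n$ (equivalently, $r(nK_2,K_3)=2n+1$) together with Andr\'asfai--Erd\H{o}s--S\'os forces the blue graph on $K_{4n}$ to be bipartite with equal parts, and the ``$K_{2n,2n}$ minus a matching'' characterisation of critical colourings follows as you describe. Your upper- and lower-bound analyses of the attached vertex $v$ are then correct: the red clique constraint caps $|N^B(v)|$ at $2n$, the $F_n$-at-$p$ trick caps the red neighbours in each part at one, and these combine to rule out $2n+2$ edges. One small slip: on $K_{4n+1}$ the blue minimum degree is at least $2n$, not $2n-1$; this is actually what you need to clear the AES threshold $\tfrac{2}{5}(4n+1)$ for all $n\ge 2$.

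The only genuine gap is $n=2$, which you correctly identify as delicate but do not resolve. For $K_8$ the AES threshold is $16/5>3$, so your minimum-degree bound $\delta^B\ge 3$ does not by itself force bipartiteness, and a separate argument is required. Your proposed enumeration of non-bipartite triangle-free graphs on eight vertices with $\delta\ge 3$ is a viable route (they are all $C_5$-blowups, and one can check that each such complement contains a red bowtie), but the proposal stops at the plan and does not carry it out. Until that case is written down, the proof is complete only for $n\ge 3$.
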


In this paper, we prove the following theorem.

\begin{theorem}\label{t1}
Let $n\geq{4}$ be an integer. Then $r_*(F_n, K_4)=4n+2$.
\end{theorem}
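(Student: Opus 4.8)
The plan is to establish the two bounds $r_*(F_n,K_4)\ge 4n+2$ and $r_*(F_n,K_4)\le 4n+2$ separately, working throughout from the known value $r(F_n,K_4)=6n+1$, so that the star-critical number concerns red/blue colourings of $K_{6n}\sqcup S_k$.

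For the lower bound I would exhibit an explicit $(F_n,K_4)$-free colouring of $K_{6n}\sqcup S_{4n+1}$. Partition $V(K_{6n})$ into three sets $A_1,A_2,A_3$ of size $2n$, colour every edge inside an $A_i$ red and every edge between two distinct $A_i$'s blue; then $G^R=3K_{2n}$ has no red $F_n$ (each red component has only $2n<2n+1$ vertices) and $G^B=K_{2n,2n,2n}$ has $\omega(G^B)=3$, hence no blue $K_4$. Now join the extra vertex $v$ by blue edges to all of $A_1\cup A_2$ and by a single red edge to one vertex $a\in A_3$, using exactly $4n+1$ edges. The red component containing $v$ is $A_3\cup\{v\}$, and for every centre its red neighbourhood inside this component induces at most a clique $K_{2n-1}$ plus an isolated vertex, of matching number $n-1$, so no red $F_n$ appears; since $v$ has blue neighbours only in $A_1\cup A_2$, any blue clique through $v$ has at most three vertices, so no blue $K_4$ appears. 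Hence $r_*(F_n,K_4)\ge 4n+2$.

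For the upper bound, suppose for contradiction that a red/blue colouring of $K_{6n}\sqcup S_{4n+2}$ has neither a red $F_n$ nor a blue $K_4$; write $W=V(K_{6n})$ and let $v$ be the extra vertex, joined to $T\subseteq W$ with $|T|=4n+2$. The heart of the argument is a structural description of the induced $(F_n,K_4)$-free colouring of $K_{6n}$: I would show $W$ admits a partition into $A_1,A_2,A_3$ with $|A_i|=2n$ such that $G^R[A_i]=K_{2n}$ for each $i$. The mechanism is that $e(G^B)\le \mathrm{ex}(6n,K_4)=12n^2$ forces $d^R(w)\le 2n+1$, hence $d^B(w)\ge 4n-2$, for every $w\in W$; a $K_4$-free graph on $6n$ vertices with this minimum degree is essentially $3$-partite (for $n\ge 4$, via a stability argument together with the few remaining small cases), and the $F_n$-freeness of $G^R$ then forces the three colour classes to have size exactly $2n$ and to induce red cliques, any red edges between classes being confined to very restricted matchings. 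This step is where the hypothesis $n\ge 4$ enters and is, I expect, the main obstacle; what follows is short.

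Granting this structure, one argues as follows. First, $v$ has at most one red edge into each $A_i$, for if $va$ and $va'$ were both red with $a,a'\in A_i$, then taking $a$ as centre, the red neighbourhood $N^R(a)\supseteq(A_i\setminus\{a\})\cup\{v\}$ contains the edge $va'$ together with a perfect matching of $A_i\setminus\{a,a'\}$, that is, $n$ independent red edges, yielding a red $F_n$. Hence $v$ has at most three red edges and at least $4n-1$ blue edges into $W$; since each $|T\cap A_i|\le 2n$, no $T\cap A_i$ can have size at most $1$ (the other two would then sum to more than $4n$), so $v$ has a blue neighbour in every $A_i$, and at least $n$ blue neighbours in each of the two largest of the sets $T\cap A_i$. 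Relabelling so that $|T\cap A_3|$ is least, one picks $a_3\in A_3\cap N^B(v)$, then $a_1\in A_1\cap N^B(v)$ with $a_1a_3$ blue, then $a_2\in A_2\cap N^B(v)$ with $a_1a_2$ and $a_2a_3$ blue: at most two choices are ever forbidden (since the between-class red edges form matchings) and $v$ has many more blue neighbours than that in $A_1$ and $A_2$, so such a selection exists and $\{v,a_1,a_2,a_3\}$ is a blue $K_4$, a contradiction. Therefore every colouring of $K_{6n}\sqcup S_{4n+2}$ contains a red $F_n$ or a blue $K_4$, so $r_*(F_n,K_4)\le 4n+2$, which together with the lower bound proves the theorem.
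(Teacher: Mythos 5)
Your lower-bound construction is exactly the paper's extremal colouring and is correct, and your endgame for the upper bound (at most one red edge from the new vertex into each class, between-class red edges forming matchings, and the greedy selection of a blue $K_4$) is sound --- arguably cleaner than the paper's own case analysis of $d_{G_i}^B(w)$. The genuine gap is the step you yourself flag as the main obstacle: the structural claim that every $(F_n,K_4)$-free colouring of $K_{6n}$ decomposes into three red cliques $K_{2n}$. That claim is essentially the entire content of the paper (Propositions \ref{p1}--\ref{c7} and Lemmas \ref{l2}, \ref{l4}), and neither mechanism you propose for it works. First, $e(G^B)\le \mathrm{ex}(6n,K_4)=12n^2$ bounds the total number of blue edges and therefore gives only an average, not a pointwise, bound on red degrees; the correct route to $d^R(w)\le 2n+1$ and $d^B(w)\le 4n$ is Ramsey-theoretic: if $d^R(w)\ge 2n+2$ then $N^R(w)$ contains a red $nK_2$ (hence a red $F_n$ centred at $w$) or a blue $K_4$ since $r(nK_2,K_4)=2n+2$, and if $d^B(w)\ge 4n+1$ then $N^B(w)$ contains a red $F_n$ or a blue $K_3$ since $r(F_n,K_3)=4n+1$.

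Second, even granting the degree bounds, the Andr\'asfai--Erd\H{o}s--S\'os stability theorem forces a $K_4$-free graph on $6n$ vertices to be $3$-partite only when its minimum degree exceeds $\frac{5}{8}\cdot 6n=\frac{15n}{4}$, and $4n-2>\frac{15n}{4}$ requires $n\ge 9$. For $4\le n\le 8$ the blue minimum degree $4n-2$ sits at or below the threshold, so ``the few remaining small cases'' are not a finite check: they are arbitrary $K_4$-free graphs on up to $48$ vertices, and excluding them needs the red $F_n$-freeness in an essential way. The paper supplies this via a different chain: it first proves $\omega(G^R)=2n$ by a lengthy analysis around a blue triangle, then shows that once one red $K_{2n}$ exists the remaining $4n$ vertices span no blue odd cycle (no blue $C_3$ by a common-neighbour count, no blue $C_5$ or $C_7$ by explicit configuration arguments, and longer odd cycles excluded by degree counting), so the blue graph there is bipartite and the tripartition follows. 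Without a substitute for this chain your upper bound is not established.
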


\section{Proof of Theorem \ref{t1}}
In order to prove our theorem, we need some theorems and lemmas.

\begin{lemma}\cite{P}\label{l1}
Let $m\geq{1}$ and $n\geq{2}$ be integers. Then $r(mK_2,K_n)=n+2m-2$. In particular, $r(nK_2,K_4)=2n+2$.
\end{lemma}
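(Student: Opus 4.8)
The plan is to prove the two inequalities $r(mK_2,K_n)\ge n+2m-2$ and $r(mK_2,K_n)\le n+2m-2$ separately; the ``in particular'' clause then follows by taking the complete graph to be $K_4$ (so the role of $n$ is played by $4$ and that of $m$ by $n$).

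For the lower bound I would exhibit a red/blue coloring of $K_{n+2m-3}$ with neither a red $mK_2$ nor a blue $K_n$. Write $n+2m-3=(2m-1)+(n-2)$ and split the vertex set into $A$ with $|A|=2m-1$ and $B$ with $|B|=n-2$. Color all edges inside $A$ red and all other edges (those inside $B$ and those between $A$ and $B$) blue. The red graph is the clique $K_{2m-1}$, whose maximum matching has only $m-1$ edges, so there is no red $mK_2$; the blue graph is the clique on $B$ completely joined to the independent set $A$, so a largest blue clique is $B$ together with a single vertex of $A$, of order $n-1$, and there is no blue $K_n$. (Here $|A|=2m-1\ge1$ and $|B|=n-2\ge0$, using $m\ge1$, $n\ge2$.) This shows $r(mK_2,K_n)\ge n+2m-2$.

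For the upper bound I would induct on $m$. The base case $m=1$ asserts $r(K_2,K_n)=n$: a red/blue coloring of $K_n$ with no red edge is entirely blue and hence contains a blue $K_n$. For the inductive step, fix $m\ge2$, assume $r((m-1)K_2,K_n)\le n+2m-4$, and consider any red/blue coloring of $K_N$ with $N=n+2m-2$ that contains no blue $K_n$; I must locate a red $mK_2$. Since $N\ge n+2m-4$, the induction hypothesis gives a red $(m-1)K_2$ or a blue $K_n$, and the latter is excluded, so there is a red matching on some set $S$ of $2m-2$ vertices. The graph induced on the remaining $N-(2m-2)=n$ vertices is complete: if one of its edges is red, that edge together with the matching on $S$ is a red $mK_2$; if none is red, those $n$ vertices form a blue $K_n$, contrary to assumption. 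Thus a red $mK_2$ always appears, giving $r(mK_2,K_n)\le n+2m-2$ and finishing the induction.

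I do not expect a genuine obstacle here — this is a classical ``stripes versus clique'' Ramsey computation — but the points needing attention are choosing the correct extremal coloring for the lower bound (a clique on $2m-1$ vertices with everything else blue) and making the arithmetic in the induction line up: that $N=n+2m-2$ really exceeds the Ramsey number $n+2m-4$ of the smaller instance, so the hypothesis applies, and that the leftover after deleting $S$ has exactly $n$ vertices, so an all-blue leftover is precisely a $K_n$.
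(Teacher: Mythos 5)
Your proof is correct. Note, however, that the paper does not prove this lemma at all: it is quoted from Lorimer \cite{P} and used as a black box, so there is no internal proof to compare yours against. Your argument is the standard one for stripes versus a clique and is complete: the extremal coloring (a red $K_{2m-1}$ joined entirely in blue to the remaining $n-2$ vertices) correctly witnesses the lower bound, since a red $K_{2m-1}$ has matching number $m-1$ and any blue clique meets $A$ in at most one vertex; and the induction on $m$ for the upper bound works because after removing the $2m-2$ vertices of a red $(m-1)K_2$ exactly $n$ vertices remain, forcing either a further disjoint red edge or a blue $K_n$. The specialization $r(nK_2,K_4)=2n+2$ follows by the substitution you indicate. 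The only value added relative to the paper is self-containment: you supply an elementary proof where the authors rely on a citation.
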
 

\begin{theorem}\cite{G}\label{t2}
$r(F_n,K_3) = 4n+1$ for $n\geq{2}$.
\end{theorem}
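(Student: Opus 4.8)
The plan is to establish the two inequalities $r(F_n,K_3)\ge 4n+1$ and $r(F_n,K_3)\le 4n+1$ separately. For the lower bound I would exhibit a single $(F_n,K_3)$-free coloring of $K_{4n}$: partition the $4n$ vertices into two sets $A,B$ with $|A|=|B|=2n$, color every edge inside $A$ or inside $B$ red, and every edge between $A$ and $B$ blue. The blue graph is then the bipartite graph $K_{2n,2n}$, which is triangle-free, so there is no blue $K_3$; the red graph is the disjoint union of two copies of $K_{2n}$, and since $F_n$ is connected on $2n+1$ vertices while each red component has only $2n$ vertices, there is no red $F_n$. Hence $K_{4n}$ is $(F_n,K_3)$-free and $r(F_n,K_3)\ge 4n+1$.

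For the upper bound, fix any red/blue coloring of $K_{4n+1}$ containing no blue $K_3$ and aim to produce a red $F_n$. The governing structural fact is that the absence of a blue triangle means the red graph contains no three pairwise red-nonadjacent vertices; equivalently, the blue neighborhood $N^B(v)$ of every vertex induces a red clique. First I would dispose of vertices of large blue degree: if some $v$ has $d^B(v)\ge 2n+1$, then $N^B(v)$ is a red clique on at least $2n+1$ vertices, which already contains $F_n$ (a fan on $2n+1$ vertices embeds in $K_{2n+1}$), and we are done. So I may assume $d^B(v)\le 2n$, i.e.\ $d^R(v)\ge 2n$, for every vertex $v$.

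The heart of the argument is to realize a red $F_n$ as a center vertex $v$ together with a red matching of size $n$ inside $N^R(v)$: each matching edge $a_ib_i$ together with the red edges $va_i,vb_i$ forms a red triangle, and the $n$ triangles share only $v$. Because the red graph induced on $N^R(v)$ inherits the property of having no three pairwise nonadjacent vertices, the vertices left uncovered by a maximum red matching of $N^R(v)$ form a red-independent set of size at most $2$; hence a red neighborhood of size $d$ carries a red matching of size at least $\lceil (d-2)/2\rceil$. This yields a matching of size $n$, and thus a red $F_n$, whenever some vertex has $d^R(v)\ge 2n+1$. The remaining case is therefore that $d^R(v)=2n$ for all $v$, so the red graph is $2n$-regular and its complement $G^B$ is a $2n$-regular triangle-free graph on the odd number $4n+1$ of vertices. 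I expect this off-by-one regular case to be the main obstacle; I would close it by noting that $2n>\tfrac{2}{5}(4n+1)$ for $n\ge 2$, so by the Andr\'asfai--Erd\H{o}s--S\'os theorem $G^B$ is bipartite, which is impossible for a regular graph of positive degree on an odd number of vertices. This contradiction finishes the upper bound, and combining the two bounds gives $r(F_n,K_3)=4n+1$.
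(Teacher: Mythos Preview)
The paper does not actually prove this theorem; it is quoted from \cite{G} and used only as a black box (in Proposition~\ref{p1} and Lemma~\ref{l4}). So there is no proof in the paper to compare your attempt against.

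That said, your argument is correct. The lower bound construction (red $2K_{2n}$, blue $K_{2n,2n}$) is the standard one. For the upper bound your reduction is clean: the absence of a blue triangle forces the red graph to have independence number at most~$2$, so every blue neighborhood is a red clique (handling the case $d^B(v)\ge 2n+1$), and in any red neighborhood of size $d$ a maximum red matching misses at most two vertices, giving a red matching of size at least $\lceil (d-2)/2\rceil$ and hence a red $F_n$ centered at $v$ whenever $d^R(v)\ge 2n+1$. The residual possibility that $G^B$ is $2n$-regular, triangle-free, on $4n+1$ vertices is disposed of neatly by the Andr\'asfai--Erd\H{o}s--S\'os theorem: the inequality $2n>\tfrac{2}{5}(4n+1)$ is equivalent to $n\ge 2$, so $G^B$ would have to be bipartite, which is impossible for a regular graph of positive degree on an odd vertex set. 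All steps are sound.
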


\begin{theorem}\cite{S}\label{t3}
Let $n\geq{2}$ be an integer. Then $r(F_n, K_4)=6n+1$.
\end{theorem}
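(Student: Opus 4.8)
The plan is to prove the two inequalities $r_*(F_n,K_4)\ge 4n+2$ and $r_*(F_n,K_4)\le 4n+2$ separately; throughout, $r=r(F_n,K_4)=6n+1$ by Theorem~\ref{t3}, so the host graph is $K_{6n}\sqcup S_k$.

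\textbf{Lower bound.} For $r_*\ge 4n+2$ it suffices to exhibit an $(F_n,K_4)$-free coloring of $K_{6n}\sqcup S_{4n+1}$. I would start from the natural critical coloring of $K_{6n}$: partition the vertices into three parts $A_1,A_2,A_3$ with $|A_i|=2n$, color every edge inside a part red and every edge between parts blue. Then $G^R=3K_{2n}$ has no red $F_n$ (each red component has only $2n<2n+1$ vertices), and $G^B=K_{2n,2n,2n}$ has no blue $K_4$ (it is tripartite). I attach the star center $v$ by coloring all $4n$ edges from $v$ to $A_1\cup A_2$ blue and exactly one edge from $v$ to a vertex $w\in A_3$ red, giving star degree $4n+1$. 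Since the blue neighborhood of $v$ meets only two parts, it contains no blue triangle and no blue $K_4$ is created; and in $G^R+vw$ the only vertex with at least $2n$ red neighbors is $w$, whose red neighborhood $(A_3\setminus w)\cup\{v\}$ has matching number $n-1$, so no red $F_n$ appears.

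\textbf{Upper bound: structure.} Assume a coloring of $K_{6n}\sqcup S_{4n+2}$ with no red $F_n$ and no blue $K_4$; write $G=K_{6n}$ and let $v$ be the star center, with blue neighborhood $B$ and red neighborhood $R$, so $|B|+|R|=4n+2$. The engine is a structural description of the $(F_n,K_4)$-free coloring of $G$. First, using Lemma~\ref{l1} in the form $r(nK_2,K_4)=2n+2$: for any vertex $u$ the set $N^R(u)$ carries no blue $K_4$, so it cannot contain a red matching of size $n$ (such a matching plus $u$ would be a red $F_n$); hence $d^R(u)\le 2n+1$ and $d^B(u)\ge 4n-2$ for every vertex. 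I would then upgrade this to the exact claim that $G^B$ is tripartite: its parts are red cliques, each red clique has at most $2n$ vertices (a larger one gives a red $F_n$ at any vertex of it), and since the parts cover $6n$ vertices they must have size exactly $2n$, so $G=(A_1,A_2,A_3)$ with each $A_i$ a red $K_{2n}$. A local argument then shows the red edges between any two parts form a matching: if $a\in A_j$ had two red neighbors $c,c'\in A_i$, then centering at $c$, matching $a$ to $c'$ and pairing off $A_i\setminus\{c,c'\}$ yields a red $F_n$.

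\textbf{Upper bound: counting.} With this structure the count closes. Put $b_i=|B\cap A_i|$ and $r_i=|R\cap A_i|$, so $\sum_i(b_i+r_i)=4n+2$ and $b_i+r_i\le 2n$. If $r_i\ge 2$, a center $c\in R\cap A_i$ together with the red edge from $v$ to another vertex of $R\cap A_i$ completes a red $F_n$; hence $r_i\le 1$ and $\sum_i r_i\le 3$. If some $b_j=0$, part $j$ contributes at most $r_j\le 1$ and the total degree is at most $2n+2n+1=4n+1$, a contradiction. So every $b_i\ge 1$; relabel so $b_1\ge b_2\ge b_3\ge 1$. From $\sum_i b_i\ge 4n-1$ and $b_1\le 2n$ one gets $b_2+b_3\ge 2n-1$, hence $b_2\ge n$ and $b_1\ge n$. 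Fixing $x\in B\cap A_3$, the sets $Y_i=\{y\in B\cap A_i:\ xy\text{ blue}\}$ for $i=1,2$ satisfy $|Y_i|\ge b_i-1\ge n-1$, since $x$ has at most one red neighbor in each part. Avoiding a blue triangle $x,y_1,y_2$ (which with $v$ would be a blue $K_4$) forces every pair in $Y_1\times Y_2$ to be a red edge between $A_1$ and $A_2$; but those form a matching, so there are at most $2n$ of them, while $|Y_1||Y_2|\ge(n-1)^2>2n$ precisely when $n\ge 4$. This contradiction gives $r_*(F_n,K_4)\le 4n+2$.

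\textbf{Main obstacle.} The two extremal counts (the lower-bound construction and the closing estimate $(n-1)^2>2n$, which is exactly why $n\ge 4$ is needed) are routine. The genuine work is the structural classification of $(F_n,K_4)$-free colorings of $K_{6n}$ as the balanced blue-tripartite graph. The minimum-degree bound $d^B\ge 4n-2$ yields tripartiteness immediately for large $n$ through an Andr\'asfai--Erd\H{o}s--S\'os-type threshold, but for the full range $n\ge 4$ that threshold is too weak; the delicate point is to rule out dense non-tripartite $K_4$-free blue graphs for small $n$ by exploiting the $F_n$-freeness of the red graph directly rather than degree counts alone. I expect this classification to be the crux of the proof.
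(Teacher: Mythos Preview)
Your proposal does not address the stated Theorem~\ref{t3}; that result, $r(F_n,K_4)=6n+1$, is quoted from \cite{S} and carries no proof in this paper. What you have actually outlined is a proof of Theorem~\ref{t1}, the main result $r_*(F_n,K_4)=4n+2$, so I compare it against the paper's proof of Theorem~\ref{t1}.

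The overall architecture matches the paper: classify every $(F_n,K_4)$-free coloring of $K_{6n}$, then argue that an extra vertex of degree $4n+2$ must create a red $F_n$ or a blue $K_4$. Your lower-bound construction is exactly the paper's. You correctly flag the structural classification (that $G^B$ is the balanced tripartite graph, up to matchings) as the crux but do not supply it; this is precisely where the paper spends its effort, first proving $\omega(G^R)=2n$ by a lengthy case analysis on a blue triangle (Lemma~\ref{l4}), and then showing a red $K_{2n}$ forces $G^R\supseteq 3K_{2n}$ by excluding blue odd cycles of length $3$, $5$, $7$ in the remainder (Propositions~\ref{k3}--\ref{c7} and Lemma~\ref{l2}). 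Your diagnosis of the obstacle is accurate, but the step itself is missing from your outline.

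Where you genuinely diverge is the final counting. The paper proceeds by an ad hoc case split on the triple $(d_{G_1}^B(w),d_{G_2}^B(w),d_{G_3}^B(w))$, chasing small configurations. Your route---bounding $r_i\le 1$, forcing every $b_i\ge 1$, then fixing $x\in B\cap A_3$ and observing that avoidance of a blue $K_4$ through $v,x$ makes every pair in $Y_1\times Y_2$ a cross-part red edge, hence part of a matching, hence at most $2n$ such edges against $(n-1)^2$ required---is shorter, exploits the matching structure you already isolated, and makes transparent why the hypothesis $n\ge 4$ enters (exactly the inequality $(n-1)^2>2n$). This is a cleaner finish than the paper's.
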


\begin{proposition}\label{p1}
Let $G=K_{6n}$ be a $(F_n,K_4)$-free graph, where $n\geq{2}$. For each $v\in{V(G)}$, $2n-1\leq{d^R(v)}\leq{2n+1}$. Consequently, $4n-2\leq{d^B(v)}\leq{4n}$.
\end{proposition}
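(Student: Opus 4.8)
The plan is to establish the two inequalities for $d^R(v)$ directly and then read off the bounds on $d^B(v)$ from $d^R(v)+d^B(v)=6n-1$, since $6n-1-(2n+1)=4n-2$ and $6n-1-(2n-1)=4n$. Two elementary translations drive everything: a red $F_n$ with center $v$ is precisely a red matching of size $n$ inside $N^R(v)$ (the $n$ disjoint red edges become the outer edges of the $n$ triangles, which share only $v$ because the edges are disjoint), and a blue triangle inside $N^B(v)$ together with $v$ spans a blue $K_4$ in $G$.

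For the upper bound $d^R(v)\leq 2n+1$, I would argue by contradiction: assume $d^R(v)\geq 2n+2$ and pick $A\subseteq N^R(v)$ with $|A|=2n+2$ (possible since $2n+2\leq 6n$). Restricting the coloring to the complete graph on $A$ and invoking Lemma \ref{l1}, which gives $r(nK_2,K_4)=2n+2$, the coloring of $A$ contains either a red $nK_2$ or a blue $K_4$. In the first case the red matching of size $n$ lies in $N^R(v)$, so $v$ together with it forms a red $F_n$ in $G$; in the second case we already have a blue $K_4$ in $G$. Both contradict $(F_n,K_4)$-freeness, so $d^R(v)\leq 2n+1$.

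For the lower bound $d^R(v)\geq 2n-1$, equivalently $d^B(v)\leq 4n$, I would again argue by contradiction: assume $d^B(v)\geq 4n+1$ and pick $B\subseteq N^B(v)$ with $|B|=4n+1$ (possible since $4n+1\leq 6n-1$ for $n\geq 2$). Restricting to the complete graph on $B$ and using Theorem \ref{t2}, namely $r(F_n,K_3)=4n+1$, the coloring of $B$ contains a red $F_n$ or a blue $K_3$. A red $F_n$ in $B$ is a red $F_n$ in $G$; a blue $K_3$ in $B\subseteq N^B(v)$ yields, together with $v$, a blue $K_4$ in $G$. Either outcome contradicts the hypothesis, so $d^R(v)\geq 2n-1$. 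Combining the two inequalities with $d^B(v)=6n-1-d^R(v)$ gives $4n-2\leq d^B(v)\leq 4n$.

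I do not expect a genuine obstacle here: the content is entirely a packaging of the two Ramsey numbers quoted as Lemma \ref{l1} and Theorem \ref{t2} via the neighbourhood reductions above. The only points requiring a moment's care are verifying that the auxiliary sets $A$ and $B$ fit inside the $6n$ vertices of $G$ (which is why $n\geq 2$ suffices) and stating the two structural translations cleanly, so that applying the off-diagonal Ramsey results to $G[A]$ and $G[B]$ immediately produces a forbidden monochromatic subgraph of $G$ itself.
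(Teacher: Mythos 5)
Your proposal is correct and follows exactly the paper's own argument: apply $r(nK_2,K_4)=2n+2$ inside $N^R(v)$ to get the upper bound on $d^R(v)$, and $r(F_n,K_3)=4n+1$ inside $N^B(v)$ to get the lower bound, with the same translations (red $nK_2$ in $N^R(v)$ gives a red $F_n$ centered at $v$; blue $K_3$ in $N^B(v)$ gives a blue $K_4$). You merely spell out the details that the paper leaves implicit.
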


\begin{proof}{
If $d^R(v)\geq{2n+2}$, then by Lemma \ref{l1}, $G[N^R(v)]$ contains a red $nK_2$ or a blue $K_4$. If $d^R(v)\leq{2n-2}$, then $d^B(v)\geq{4n+1}$. By Theorem \ref{t2}, $G[N^B(v)]$ contains a blue $K_3$ or a red $F_n$. In both cases, we reach to a contradiction.
}\end{proof}

\begin{proposition}\label{k3}
Let $n \geq 2$ and $G$ be a $(F_n, K_4)$-free graph of order $6n$ which contains a red $K_{2n}$, say $K$. Then $G-K$ contains no blue triangle.
\end{proposition}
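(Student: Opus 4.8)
The plan is to argue by contradiction. Suppose $G-K$ contains a blue triangle on vertices $x,y,z$, and write $T=\{x,y,z\}$. I will produce from this either a red $F_n$ or a blue $K_4$, which is impossible since $G$ is $(F_n,K_4)$-free. The whole argument rests on counting the red edges joining $K$ to $T$.

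First I would record the elementary fact that every vertex $w\in V(G)\setminus T$ has at least one red neighbor in $T$: otherwise the three edges from $w$ to $x,y,z$ are all blue and, together with the blue triangle $T$, they give a blue $K_4$ on $\{w,x,y,z\}$. Applying this to each of the $2n$ vertices of $K$ (which are disjoint from $T$) and double counting the red $K$–$T$ edges gives
\[
d_K^R(x)+d_K^R(y)+d_K^R(z)=\sum_{w\in K}\bigl|N^R(w)\cap T\bigr|\ge|K|=2n.
\]
Since $2n\ge 4>3$, the three terms on the left cannot all be at most $1$, so some vertex of $T$ — say $x$ — has at least two red neighbors $c_1,c_2$ inside $K$.

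The second step is to build a red fan with center $c_1$. The edge $xc_2$ is red (as $x$ is a red neighbor of $c_2$), and $K\setminus\{c_1,c_2\}$ is a red clique on $2n-2\ge 2$ vertices, hence has a red perfect matching of $n-1$ edges; adding $xc_2$ produces $n$ pairwise vertex-disjoint red edges. All $2n$ endpoints of these edges lie in $N^R(c_1)$: indeed $x,c_2\in N^R(c_1)$ by the choice of $c_1$, and $K\setminus\{c_1,c_2\}\subseteq N^R(c_1)$ because $K$ is a red clique. Therefore $\{c_1\}\cup\{x,c_2\}\cup\bigl(K\setminus\{c_1,c_2\}\bigr)$ induces a red $F_n$, the desired contradiction, so $G-K$ contains no blue triangle.

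I do not anticipate a serious obstacle. The two points that need care are making sure the $n$ matching edges really lie inside the red neighborhood of the chosen center — this is exactly where we use that $|K|=2n$ is even (so $K\setminus\{c_1,c_2\}$ still admits a red perfect matching) and that $K$ is monochromatically red — and checking that the counting bound $2n>3$ is available, which only needs $n\ge 2$, already in the hypothesis. It is worth noting that neither the precise order $6n$ of $G$ nor Proposition~\ref{p1} is needed here; only that $K$ is a red $K_{2n}$ with $n\ge 2$ and that $G$ is $(F_n,K_4)$-free.
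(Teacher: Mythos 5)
Your proof is correct and is essentially the paper's argument run from the dual side of the same double count of the $K$--$T$ edges: the paper first rules out $d_K^R(u)\ge 2$ for $u\in T$ (which would give the red fan $K\cup\{u\}$, exactly the fan you construct explicitly via $xc_2$ plus a perfect matching of $K\setminus\{c_1,c_2\}$) and then finds a common blue neighbor of $x,y,z$ in $K$, i.e.\ a blue $K_4$, whereas you first rule out a vertex of $K$ with three blue edges to $T$ (a blue $K_4$) and then conclude by pigeonhole that some vertex of $T$ has two red neighbors in $K$, i.e.\ a red $F_n$. Both arguments use the same two forbidden configurations and the same counting, so this is the same approach with the roles of the two contradictions interchanged.
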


\begin{proof}{
By contrary, suppose that $G-K$ contains a blue triangle, say $T=xyz$. If $d^{R}_{K}(u) \geq 2$, for some $u \in \{x,y,z\}$, then $K\cup\{u\}$ contains a red $F_n$, which is a contradiction. Therefore, $d^{B}_{K}(u) \geq 2n-1$, for every $u \in V(T)$. Thus, $N^{B}(x) \cap N^{B}(y) \cap N^{B}(z) \neq \emptyset$, which implies that $G$ contains a blue $K_4$, a contradiction.
}\end{proof}

\begin{proposition}\label{c5c7}
Let $n \geq {4}$ and $G$ be a $(F_n, K_4)$-free graph of order $6n$ which contains a red $K_{2n}$, say $K$. Also, suppose that $C_t$ is a shortest blue odd cycle in $G-K$. Then $t=5$ or $t=7$.
\end{proposition}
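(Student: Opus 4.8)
Since $C_t$ is an odd cycle, $t\ge 3$; and $t=3$ is impossible, for a blue $C_3$ would be a blue triangle in $G-K$, contradicting Proposition \ref{k3}. Thus $t\in\{5,7,9,\ldots\}$, and it suffices to rule out $t\ge 9$. The whole argument is a degree count showing that, when $C_t$ is long, two consecutive vertices of $C_t$ are forced to share a blue neighbour outside $C_t$, which reinstates a blue triangle in $G-K$.

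So suppose $t\ge 9$ and write $C_t=v_1v_2\cdots v_tv_1$ (indices mod $t$). First, $C_t$ has no blue chord: a blue chord $v_av_b$ splits $C_t$ into two blue cycles whose lengths add up to $t+2$, so one of them is odd, and since each of the two $v_a$--$v_b$ arcs of $C_t$ has length between $2$ and $t-2$, that odd cycle has length strictly between $2$ and $t$ --- contradicting the minimality of $C_t$. Hence $N_{C_t}^B(v_i)=\{v_{i-1},v_{i+1}\}$ and $d_{C_t}^B(v_i)=2$ for every $i$. Second, $d_K^R(v_i)\le 1$ for every $i$: if $a,b\in N_K^R(v_i)$ were two red neighbours of $v_i$ in $K$, then $N^R(a)$ would contain the $2n$ vertices $(K\setminus\{a\})\cup\{v_i\}$, among which $\{v_i,b\}$ is a red edge and $K\setminus\{a,b\}$ is a red $K_{2n-2}$; pairing $v_i$ with $b$ and taking a perfect matching of $K\setminus\{a,b\}$ produces $n$ disjoint red edges in $N^R(a)$, i.e.\ a red $F_n$ centred at $a$ (this is the observation already used in the proof of Proposition \ref{k3}). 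Therefore $d_K^B(v_i)\ge 2n-1$ for every $i$.

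Let $R:=(G-K)\setminus V(C_t)$, so $|R|=6n-2n-t=4n-t$. By Proposition \ref{p1} we have $d^B(v_i)\ge 4n-2$, and splitting this blue degree over the partition $V(G)=V(C_t)\sqcup K\sqcup R$ gives
\[
d_R^B(v_i)=d^B(v_i)-d_{C_t}^B(v_i)-d_K^B(v_i)\ge (4n-2)-2-2n=2n-4 .
\]
Apply this to a consecutive pair $v_i,v_{i+1}$ of $C_t$. Both $N_R^B(v_i)$ and $N_R^B(v_{i+1})$ are subsets of $R$, so
\[
\bigl|N_R^B(v_i)\cap N_R^B(v_{i+1})\bigr|\ge (2n-4)+(2n-4)-(4n-t)=t-8\ge 1 .
\]
Choose $z$ in this intersection. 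Then $z\in G-K$ and the edges $zv_i$, $zv_{i+1}$, $v_iv_{i+1}$ are all blue, so $\{z,v_i,v_{i+1}\}$ induces a blue triangle in $G-K$, contradicting Proposition \ref{k3}. Hence $t\le 7$, and together with $t$ odd and $t\neq 3$ this gives $t=5$ or $t=7$.

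The crux is the displayed degree split combined with the smallness of $|R|=4n-t$: once $d_{C_t}^B(v_i)=2$ and $d_K^B(v_i)\ge 2n-1$ are in hand, Proposition \ref{p1} forces at least $2n-4$ blue neighbours of each cycle vertex into the $(4n-t)$-vertex set $R$, and for $t\ge 9$ two adjacent cycle vertices can no longer avoid a common one. Establishing chord-freeness of $C_t$ (to get $d_{C_t}^B(v_i)=2$) and $d_K^R(v_i)\le 1$ are the supporting facts that keep the count tight enough to work even in the extreme case $n=4$, $t=9$, where $|R|=7$ and $2(2n-4)=8>7$.
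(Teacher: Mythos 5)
Your proof is correct and takes essentially the same route as the paper: both bound $d^{B}_{G-(K\cup C_t)}(v_i)\ge 2n-4$ using Proposition \ref{p1}, then note that two adjacent cycle vertices cannot share a blue neighbour in the remaining $4n-t$ vertices (else a blue triangle, contradicting Proposition \ref{k3}), which forces $t\le 8$. You simply make explicit two steps the paper leaves implicit (chordlessness of $C_t$, hence $d^{B}_{C_t}(v_i)=2$), while your aside proving $d_K^{B}(v_i)\ge 2n-1$ is superfluous, since the count only needs the trivial upper bound $d_K^{B}(v_i)\le 2n$.
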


\begin{proof}{
Proposition \ref{k3} implies that $t \geq 5$. Let $u$ be an arbitrary vertex of $C_t$. By Proposition \ref{p1}, since $d^B(u)\geq{4n-2}$, we conclude that $d_{G-(K\cup C_t)}^B(u) \geq 2n-4$. Since $G-K$ contains no blue triangle, each two adjacent vertices of $C_t$, say $u$ and $v$, have at least $4n-8$ distinct blue neighbors in $G-(K\cup C_t)$. Thus, $4n-t \geq 4n-8$, which implies that $t\leq{8}$ and hence $t=5$ or $t=7$.
}\end{proof}

\begin{proposition}\label{c5}
Let $n \geq {4}$ and $G$ be a $(F_n, K_4)$-free graph of order $6n$ which contains a red $K_{2n}$, say $K$. Then $G-K$ contains no blue $C_5$.
\end{proposition}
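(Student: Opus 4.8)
The plan is to assume for contradiction that $G-K$ contains a blue $5$-cycle $C=v_1v_2v_3v_4v_5$ and to reach a contradiction from a degree count in the blue graph of $G-K$. Two easy preliminary observations come first. Since $G-K$ has no blue triangle (Proposition \ref{k3}), each chord $v_iv_{i+2}$ of $C$ (indices mod $5$) must be red, for otherwise $v_iv_{i+1}v_{i+2}$ would be a blue triangle; hence $C$ is induced in the blue graph $G^B[G-K]$ and every $v_i$ has exactly two blue neighbours inside $V(C)$. Also $|V(G)\setminus V(K)|=6n-2n=4n$, so every vertex of $G-K$ has exactly $4n-1$ neighbours inside $G-K$.

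The crucial step is the degree estimate: $d^B_{G-K}(v)\ge 2n-1$ for every $v\in V(G)\setminus V(K)$. To prove it I would suppose $d^R_{G-K}(v)\ge 2n+1$ and put $S=N^R_{G-K}(v)$. The subgraph $G^B[S]$ is an induced subgraph of the triangle-free graph $G^B[G-K]$, so it has clique number at most $2$; equivalently the red graph $G^R[S]$ has no independent set of size $3$. Taking a maximum matching $M$ in $G^R[S]$, the $M$-unsaturated vertices of $S$ form an independent set of $G^R[S]$ (otherwise $M$ could be enlarged), hence number at most $2$, so $2|M|\ge|S|-2\ge 2n-1$ and therefore $|M|\ge n$. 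The $2n$ endpoints of $n$ edges of $M$ all lie in $N^R_{G-K}(v)$, so together with $v$ they contain a red $F_n$ (centre $v$, the $n$ matching edges forming the required perfect matching on the $2n$ leaves), contradicting that $G$ is $(F_n,K_4)$-free. Thus $d^R_{G-K}(v)\le 2n$, and $d^B_{G-K}(v)\ge(4n-1)-2n=2n-1$.

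It then remains to count. For $i\in\{1,\dots,5\}$ set $A_i=N^B_{G-K}(v_i)\setminus V(C)$. By the first observation $|A_i|=d^B_{G-K}(v_i)-2\ge 2n-3$, so $\sum_{i=1}^5|A_i|\ge 10n-15$. On the other hand $A_i\cap A_{i+1}=\emptyset$ for every $i$, since a common blue neighbour of $v_i$ and $v_{i+1}$ would create a blue triangle; consequently, for each $z\in V(G)\setminus(V(K)\cup V(C))$ the set $\{i:z\in A_i\}$ contains no two consecutive indices of the $5$-cycle and hence has size at most $2$. As there are only $4n-5$ such vertices $z$, we obtain $\sum_{i=1}^5|A_i|\le 2(4n-5)=8n-10$. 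Combining the two bounds gives $10n-15\le 8n-10$, i.e. $n\le 2$, contradicting $n\ge 4$; so $G-K$ contains no blue $C_5$.

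The step I expect to be the real obstacle is the degree bound $d^B_{G-K}(v)\ge 2n-1$. The cheap bound $d^B_{G-K}(v)\ge 2n-2$, obtained at once from Proposition \ref{p1}, makes the final count succeed only for $n\ge 6$; squeezing out the extra $+1$ via the maximum-matching argument inside the red neighbourhood of $v$ (using only that the blue graph on $G-K$ is triangle-free) is precisely what is needed to cover the remaining small cases $n=4$ and $n=5$. Once that bound is in hand, the rest is a routine double count.
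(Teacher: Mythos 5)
Your proof is correct, and it takes a genuinely different route from the paper's. The paper builds two explicit disjoint red cliques $R_1=X_1\cup\{u_2,u_5\}$ and $R_2=X_2\cup\{u_1,u_3\}$ of order $2n-2$ out of the blue neighbourhoods of the cycle vertices and then runs a lengthy case analysis on the three leftover vertices $a,b,c$ of $G-(K\cup R_1\cup R_2)$, repeatedly extending red cliques until a red $F_n$ or blue $K_4$ appears. You instead (i) sharpen the degree bound inside $G-K$ from the cheap $d^B_{G-K}(v)\ge 2n-2$ of Proposition \ref{p1} to $d^B_{G-K}(v)\ge 2n-1$, via a maximum-matching argument in $G^R[N^R_{G-K}(v)]$ that exploits the triangle-freeness of $G^B[G-K]$ (this is in effect a local re-proof of $r(nK_2,K_3)=2n+1$, so you could equally cite Lemma \ref{l1}: if $|N^R_{G-K}(v)|\ge 2n+1$ then that set carries a red $nK_2$ or a blue $K_3$, both fatal); and (ii) double-count the sets $A_i=N^B_{G-K}(v_i)\setminus V(C)$, using that each of the $4n-5$ outside vertices lies in $A_i$ for at most two non-consecutive indices $i$, to get $10n-15\le 8n-10$, a contradiction for $n\ge 3$. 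All the individual steps check out: the cycle is blue-induced by Proposition \ref{k3}, the unsaturated vertices of a maximum red matching would otherwise form a blue triangle, and the counting is exact. Your argument is substantially shorter and more transparent than the paper's, covers the required range $n\ge 4$ with a vertex to spare, and correctly identifies that the extra $+1$ in the degree bound is exactly what makes the count close for $n=4,5$; the paper's case analysis, by contrast, yields more structural information about how the colouring must look near the cycle, which is in the spirit of its later lemmas but is not needed for this proposition.
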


\begin{proof}{
By contrary, suppose that $G$ contains a blue $C_5$. Note that Proposition \ref{k3} implies that this cycle is an induced cycle in $G^B$. Let $V(C_5)=\{u_1, \ldots, u_5\}$ such that $u_i$ is adjacent to $u_{i+1}$, for $i=1, \ldots, 4$ and $u_5$ is adjacent to $u_1$. For each $u_i \in V(C_5)$, we have $|N_{G-(K\cup C_5)}^B(u_i)| \geq 2n-4$. 
\\
Let $X_i \subseteq N_{G-(K\cup C_5)}^B(u_i)$ with $|X_i|=2n-4$, for $i=1,2, \ldots, 5$. Then Proposition \ref{k3} implies that $R_1=X_1 \cup \{u_2, u_5\}$ and $R_2=X_2 \cup \{u_1, u_3\}$ are disjoint red cliques of order $2n-2$ (See Figure \ref{f1}). Let $a, b$ and $c$ be the remaining vertices of $G$. Proposition \ref{k3} implies that at least one of the edges between $a, b, c$ is red, for instance $ab$.
\\
Suppose that all edges between two sets $\{a, b, c\}$ and  $\{u_1, u_2\}$ are red. Then all edges between $c$ and $R_1\cup R_2-\{u_1, u_2\}$ are blue, otherwise $R_1\cup\{a,b,c\}$ or $R_2\cup\{a,b,c\}$ contains a red $F_n$ with the center $u_2$ or $u_1$, respectively. So, $d_{G-K}^B(c)\geq 4n-6=(2n-2)+(2n-2)-2$ and consequently $d^{B}_{K}(c) \leq 6$, since $d^B(c)\leq{4n}$. Now, since $n \geq 4$, $K\cup\{c\}$ contains a red $F_n$, which is a contradiction.
\\ 
So, with no loss of generality, we may assume that $a$ has a blue edge to $u_1$. Proposition \ref{k3} implies that all edges between $a$ and $R_1$ are red. Let $R_1^\prime=R_1\cup \{a\}$. By Proposition \ref{k3}, at least one of the edges between $b, c, u_4$ is red, for instance $bc$. Similarly, all edges of the set $\{b,c\}$ to the set $\{u_1,u_2\}$ can not be red.  If all edges of $\{b,c\}$ to the set $\{u_1,u_2\}$ are red, then all edges between $u_4$ and $R_1\cup R_2-\{u_1, u_2\}$ are blue, otherwise $R_1\cup\{b,c,u_4\}$ or $R_2\cup\{b,c,u_4\}$ contains a red $F_n$ with the center $u_2$ or $u_1$, respectively. It means $d_{G-K}^B(u_4) \geq 4n-6$ and consequently, $d_{K}^{B}(u_4) \leq 6$. Since $n \geq 4$, $K \cup \{u_4\}$ contains a red $F_n$, which is a contradiction. 
\\
With no loss of generality, suppose that $bu_1$ or $bu_2$ is blue (Note that this case for $c$ is similar).
\\ 
If $bu_1$ is blue, then by Proposition \ref{k3} all edges between $b$ and $R_1^\prime$ are red. So, $R_1^{\prime\prime}=R_1^\prime \cup \{b\}$ is a red clique of order $2n$. Since $G$ contains no red $F_n$, $d_{R_{1}^{\prime\prime}}^{R}(u_3), d_{R_{1}^{\prime\prime}}^{R}(u_4) \leq 1$. So, $u_3$ and $u_4$ have at least $2n-2$ blue common neighbors in $R_1^{\prime\prime}$, a contradiction with the fact that $G-K$ has no blue triangle.
\\
If $bu_2$ is blue, then all the edges between $b$ and $R_2$ are red, by Proposition \ref{k3}. Then $R_2^\prime=R_2\cup \{b\}$ is a red clique of order $2n-1$. By Proposition \ref{k3}, one of the edges $cu_1$ or $cu_2$ is red, for instance $cu_1$. The edge $cu_4$ is blue, otherwise $R_2^\prime\cup \{c,u_4\}$ contains a red $F_n$ with the center $u_1$. Thus, $cu_3$ and $cu_5$ are red since $G-K$ contains no blue $K_3$. Therefore $R_2^\prime \cup \{c, u_5\}$ contains a red $F_n$ with the center $u_3$, a contradiction.
}\end{proof}

\begin{proposition}\label{c7}
Let $n \geq {4}$ and $G$ be a $(F_n, K_4)$-free graph of order $6n$ which contains a red $K_{2n}$, say $K$. Then $G-K$ contains no blue $C_7$.
\end{proposition}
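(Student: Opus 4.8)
The plan is to argue by contradiction, mirroring the structure of the proof of Proposition~\ref{c5}, but now with a blue $7$-cycle $C_7=u_1u_2\cdots u_7u_1$ that, by Proposition~\ref{k3}, is induced in $G^B$. The first step is to record the local count: by Proposition~\ref{p1} every vertex has $d^B(v)\ge 4n-2$, and since $G-K$ is triangle-free in blue, consecutive vertices of $C_7$ contribute large disjoint red cliques. Concretely, for each $i$ pick $X_i\subseteq N_{G-(K\cup C_7)}^B(u_i)$ with $|X_i|=2n-4$; then $X_i\cup\{u_{i-1},u_{i+1}\}$ is a red clique of order $2n-2$ (indices mod $7$), because $X_i$ is red-complete to itself and to $u_{i-1},u_{i+1}$ — any blue edge there would create a blue triangle with $u_i$. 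The key tension to exploit is that only $6n-7$ vertices lie outside $C_7$, while these seven ``blow-up'' sets, if they were disjoint, would already account for $7(2n-4)=14n-28$ vertices; for $n\ge 4$ this forces heavy overlaps among the $X_i$'s, and in particular among \emph{non-adjacent} $u_i$'s the blue neighbourhoods must intersect substantially (a common blue neighbour of $u_i$ and $u_j$ is harmless only when $|i-j|\ge 2$).

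Next I would use these overlaps to build a red clique of order at least $2n$, which is the engine for producing a red $F_n$. The mechanism is the same as in Proposition~\ref{c5}: start from a red clique $R=X_i\cup\{u_{i-1},u_{i+1}\}$ of size $2n-2$, and try to absorb two further vertices. A candidate extra vertex $w$ (either one of the $\le 6n-7-5(2n-4)=...$ leftover vertices, or some $u_j$, or a vertex forced into two of the $X_i$'s) is absorbed into $R$ unless it has a blue edge into $R$; but Proposition~\ref{k3} says a blue edge from $w$ to the red-$K_{2n-2}$ $R$ is fine only in limited ways, and a blue edge from $w$ to one of $X_i$, combined with $w$'s blue edge to $u_i$, would give a blue triangle — \emph{unless} that blue edge goes to $u_{i-1}$ or $u_{i+1}$. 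So the obstruction to extending $R$ is very rigid. Pushing this: if $R$ reaches size $2n-1$, any vertex $v$ with $d_R^R(v)\ge 2$ completes a red $F_n$; hence every vertex outside $R$ has at most one red neighbour in $R$, i.e.\ at least $2n-2$ blue neighbours in $R$, and two such vertices that are blue-adjacent (say two consecutive $u_j$'s, or a $u_j$ and a vertex of some $X_k$) then share $\ge 2n-4\ge 1$ blue common neighbour inside $R$, giving a blue triangle in $G-K$ — contradiction. If instead we reach size exactly $2n$, the bound becomes $d_{R}^R(v)\le 1$ directly, and the same common-blue-neighbour argument applies to a blue edge of the $C_7$ lying outside $R$.

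The concrete casework I anticipate: set $R_1=X_1\cup\{u_2,u_7\}$ and $R_2=X_2\cup\{u_1,u_3\}$, disjoint red cliques of size $2n-2$, and let $W$ be the remaining $\le 6n-7-2(2n-4)=2n+1$ vertices (including $u_4,u_5,u_6$ and any part of $X_3,\dots,X_7$ not in $R_1\cup R_2$). By Proposition~\ref{k3}, $G[W]$ has no blue triangle, so it has a red edge, and more strongly its red graph is dense enough (Ramsey-type) to find red structure; meanwhile each $w\in W$ is red-complete to $R_1$ or is blue-adjacent to exactly one of $u_2,u_7$ and red-complete to $X_1$. Running the dichotomy ``$w$ blue to $u_i\Rightarrow w$ red-complete to $X_i$'' for $w$ ranging over $W$ and $i\in\{1,2,4,5,6\}$ forces, for each $w$, a red-complete relationship to all but a bounded number of the $X_i$'s; combined with the size bound this lets us enlarge $R_1$ (or $R_2$) to a red $K_{2n}$ absorbing two vertices of $W$, or else pin two blue-adjacent vertices with $2n-2$ blue neighbours in a red $K_{2n-1}$, each outcome a contradiction. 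The main obstacle will be organizing this casework cleanly — in the $C_5$ proof there were only $3$ leftover vertices, whereas here $|W|$ grows with $n$, so instead of naming the leftover vertices individually one must argue with cardinalities throughout: the crux is to show that one cannot simultaneously keep all seven red cliques $X_i\cup\{u_{i-1},u_{i+1}\}$ from merging (via shared vertices forced by the pigeonhole bound $7(2n-4)>6n-7$ for $n\ge 4$) into a red clique of order $\ge 2n$ while avoiding a blue triangle in $G-K$.
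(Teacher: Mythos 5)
Your outline does not close; it is a strategy sketch with an acknowledged unresolved ``main obstacle,'' and at least one of its intermediate claims is false. You assert that each $w\in W$ ``is red-complete to $R_1$ or is blue-adjacent to exactly one of $u_2,u_7$ and red-complete to $X_1$.'' The only constraint Proposition~\ref{k3} gives is the implication ``$w$ blue to $u_i$ $\Rightarrow$ $w$ red-complete to $X_i$''; a vertex $w$ that is \emph{red} to $u_1$ may have arbitrarily many blue edges into $X_1$ without creating a blue triangle (since $X_1$ is a red clique), so the claimed trichotomy fails and the absorption mechanism you build on it does not get off the ground. A smaller but real gap: inducedness of the blue $C_7$ does not follow from Proposition~\ref{k3} alone, since a chord $u_iu_{i+3}$ creates a blue $C_5$, not a triangle; you need Proposition~\ref{c5} as well, which is exactly why the paper cites both.

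The missing idea is that no clique-growing or pigeonhole machinery is needed: once the $C_7$ is induced in $G^B$, the vertex $u_1$ is \emph{red}-adjacent to all four of $u_3,u_4,u_5,u_6$, and among these the pairs $u_3u_5$ and $u_4u_6$ are red edges. Taking $R_2=X_2\cup\{u_1,u_3\}$ as you do, the set $X_2$ (a red clique of size $2n-4$, red-complete to $u_1$) supplies $n-2$ disjoint red edges, and $u_3u_5$, $u_4u_6$ supply two more, all $2n$ endpoints being red neighbours of $u_1$. Hence $R_2\cup\{u_4,u_5,u_6\}$ already contains a red $F_n$ centered at $u_1$, and the proof is a one-step contradiction. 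This is precisely where $C_7$ differs from $C_5$: a vertex of an induced blue $C_5$ has only two red neighbours on the cycle (one red edge), which is why the $C_5$ case genuinely requires the long case analysis you were trying to replicate, while the $C_7$ case does not.
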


\begin{proof}{
By contrary, suppose that $G$ contains a blue $C_7$. Propositions \ref{k3} and \ref{c5} yield that this cycle is an induced cycle in $G^B$. Let
$V(C_7)=\{u_1, \ldots, u_7\}$ such that $u_i$ is adjacent to $u_{i+1}$, for $i=1, \ldots, 6$ and $u_7$ is adjacent to $u_1$. By Proposition \ref{p1}, for every $u_i \in V(C_7)$, we have $d_{G-(K\cup C_7)}^B(u_i) \geq 2n-4$. Let $X_i \subseteq N_{G-(K\cup C_7)}^{B}(u_i)$, for $i=1,2, \ldots, 7$.
Then $R_1= X_1 \cup \{u_2, u_7\}$ and $R_2 = X_2 \cup \{u_1, u_3\}$ are red cliques of order $2n-2$ since $G-K$ contains no blue triangle (See Figure \ref{f1}).
It is easy to see that $R_2\cup \{u_4,u_5,u_6\}$ induces a red $F_n$ with the center  $u_1$, a contradiction.
}\end{proof}

Now, we can prove the following.

\begin{lemma}\label{l2}
Let $n \geq {4}$ be an integer and $G^R$ be the graph induced by red edges from a $(F_n,K_4)$-free coloring of $G=K_{6n}$. If $G^R$  contains $K_{2n}$, then it contains $3K_{2n}$.
\end{lemma}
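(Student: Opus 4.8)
The plan is to start from a red $K_{2n}$, call it $K$, and show that $G-K$, a graph on $4n$ vertices, must contain two further disjoint red $K_{2n}$'s. The first and most natural step is to understand the structure of $G-K$. By Propositions \ref{k3}, \ref{c5}, and \ref{c7}, together with Proposition \ref{c5c7}, the graph $(G-K)^B$ has no blue odd cycle at all: it contains no blue triangle, no blue $C_5$, and no blue $C_7$, and any shortest blue odd cycle would have to have length $5$ or $7$. Hence $(G-K)^B$ is bipartite. Equivalently, the complement within $G-K$, namely $(G-K)^R$, has the property that the complement of a bipartite graph on $4n$ vertices contains two disjoint cliques whose sizes sum to $4n$ (take the two color classes of the bipartition). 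So $(G-K)^R \supseteq K_a \cup K_b$ with $a+b = 4n$, and WLOG $a \geq b$, so $a \geq 2n$.

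From here I would argue that in fact we can get $a = b = 2n$, or at least repackage the vertices to obtain two disjoint red $K_{2n}$'s. If $a \geq 2n$ and $b \geq 2n$ we are immediately done since $a+b=4n$ forces $a=b=2n$. The problematic case is when the bipartition is unbalanced, say $a > 2n$ and $b < 2n$, i.e. the big red clique $A$ in $G-K$ has more than $2n$ vertices. But a red clique of order $\geq 2n+1$ inside $G-K$ is itself a red $K_{2n+1}$, and one should be able to derive a contradiction from the existence of too large a red clique: a red $K_{2n+1}$ together with its interaction with $K$ and the rest should force a red $F_n$ (a red $K_{2n+1}$ has $2n+1$ vertices; adding any vertex with two red neighbors in it creates a red $F_n$, and degree constraints from Proposition \ref{p1} will force such a vertex to exist among $K$ or the remaining $b$ vertices). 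This is the step I expect to be the main obstacle: carefully ruling out a red clique of order exceeding $2n$ in $G-K$, and more generally showing the bipartition of $(G-K)^B$ must be balanced, using the red/blue degree bounds of Proposition \ref{p1} and Lemma \ref{l1} applied to neighborhoods. One must also check that the two red $2n$-cliques extracted from $G-K$ are genuinely disjoint from $K$ and from each other, which is automatic from the construction since they lie in $G-K$ and partition its vertex set.

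Concretely, the steps in order: (1) invoke Propositions \ref{k3}, \ref{c5}, \ref{c7}, \ref{c5c7} to conclude $(G-K)^B$ is triangle-free with no blue $C_5$ or $C_7$, hence has no odd cycle, hence is bipartite; (2) take the bipartition $V(G-K) = A \sqcup B$, so $A$ and $B$ induce red cliques with $|A|+|B| = 4n$; (3) show $\min(|A|,|B|) \geq 2n$, equivalently rule out a red clique of order $\geq 2n+1$ in $G-K$, using that such a clique plus a suitably chosen extra vertex (existing by the degree bounds of Proposition \ref{p1}) yields a red $F_n$; (4) conclude $|A|=|B|=2n$, so $K$, $G[A]$, $G[B]$ are three disjoint red $K_{2n}$'s, i.e. $G^R \supseteq 3K_{2n}$. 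The only subtlety beyond step (3) is that the bipartition of a bipartite graph need not be unique when the graph is disconnected; but since we only need \emph{some} partition into two red cliques of the right size, we can always move whole connected components of $(G-K)^B$ between the two sides to balance the sizes, provided no single component is too large — and a connected bipartite graph with a part of size $> 2n$ again gives a red clique of order $> 2n$, handled by step (3). I would present step (3) as the key lemma-within-the-proof and the rest as bookkeeping.
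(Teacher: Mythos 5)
Your proposal follows essentially the same route as the paper: use Propositions \ref{k3}, \ref{c5c7}, \ref{c5} and \ref{c7} to conclude that $(G-K)^B$ has no odd cycle and is therefore bipartite, so the two parts are red cliques partitioning the $4n$ vertices of $G-K$, and then argue that neither part can exceed $2n$. The only correction is that the step you flag as ``the main obstacle'' is in fact immediate and needs none of the machinery you anticipate: since $F_n = K_1 + nK_2$ has exactly $2n+1$ vertices and maximum degree $2n$, it is a spanning subgraph of $K_{2n+1}$, so a red clique of order $2n+1$ in $G-K$ would by itself be a red $F_n$ --- no auxiliary vertex from $K$ or elsewhere is needed. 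With that observation every bipartition of $(G-K)^B$ has both parts of size at most $2n$, hence exactly $2n$, and your remaining bookkeeping (including the remark about moving components, which becomes unnecessary) goes through; this is precisely the paper's one-line argument.
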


\begin{figure}
\centering \includegraphics[width=150mm]{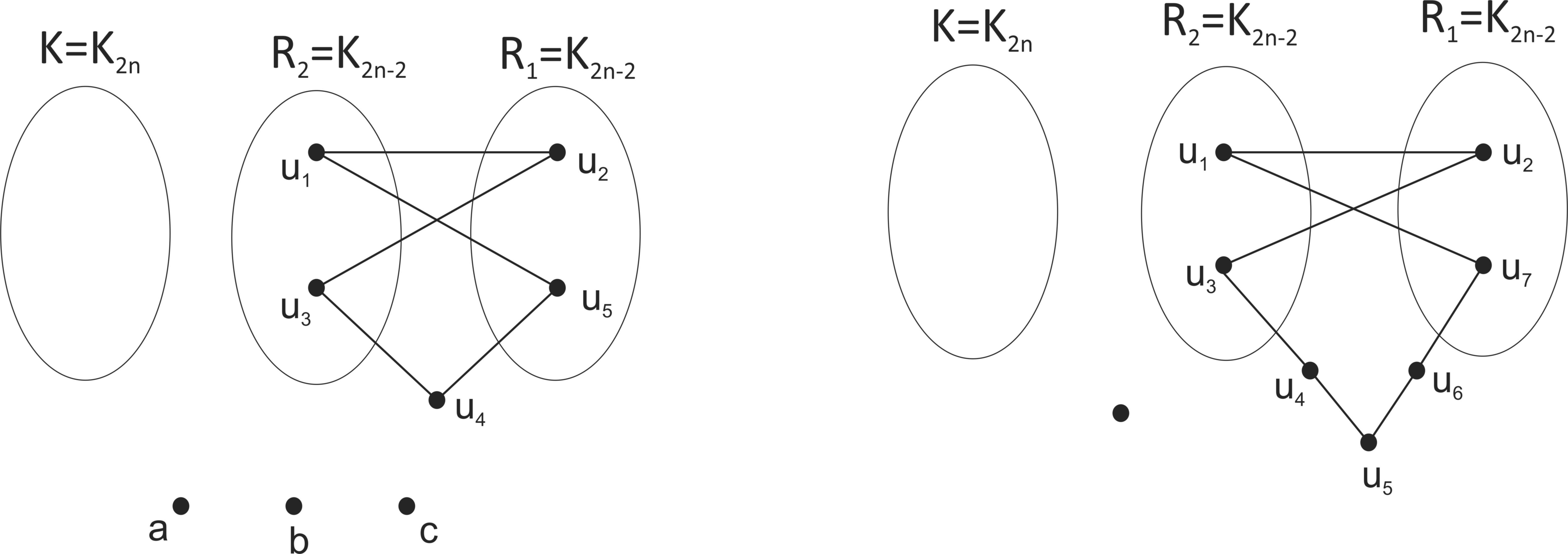}
\\
\caption{$G-K$ containing an induced blue $C_5$ or $C_7$. }\label{f1}
\end{figure}

\begin{proof}{
By Propositions \ref{k3}, \ref{c5} and \ref{c7}, we can conclude that $G-K$ contains no blue odd cycle. Thus,  $G^B[G-K]$ is bipartite. Since $G$ contains no red $F_n$, the size of each part is at most $2n$ and hence there exists a $3K_{2n}$ in $G^R$.  
}\end{proof}

\begin{figure} 
\centering \includegraphics[width=50mm]{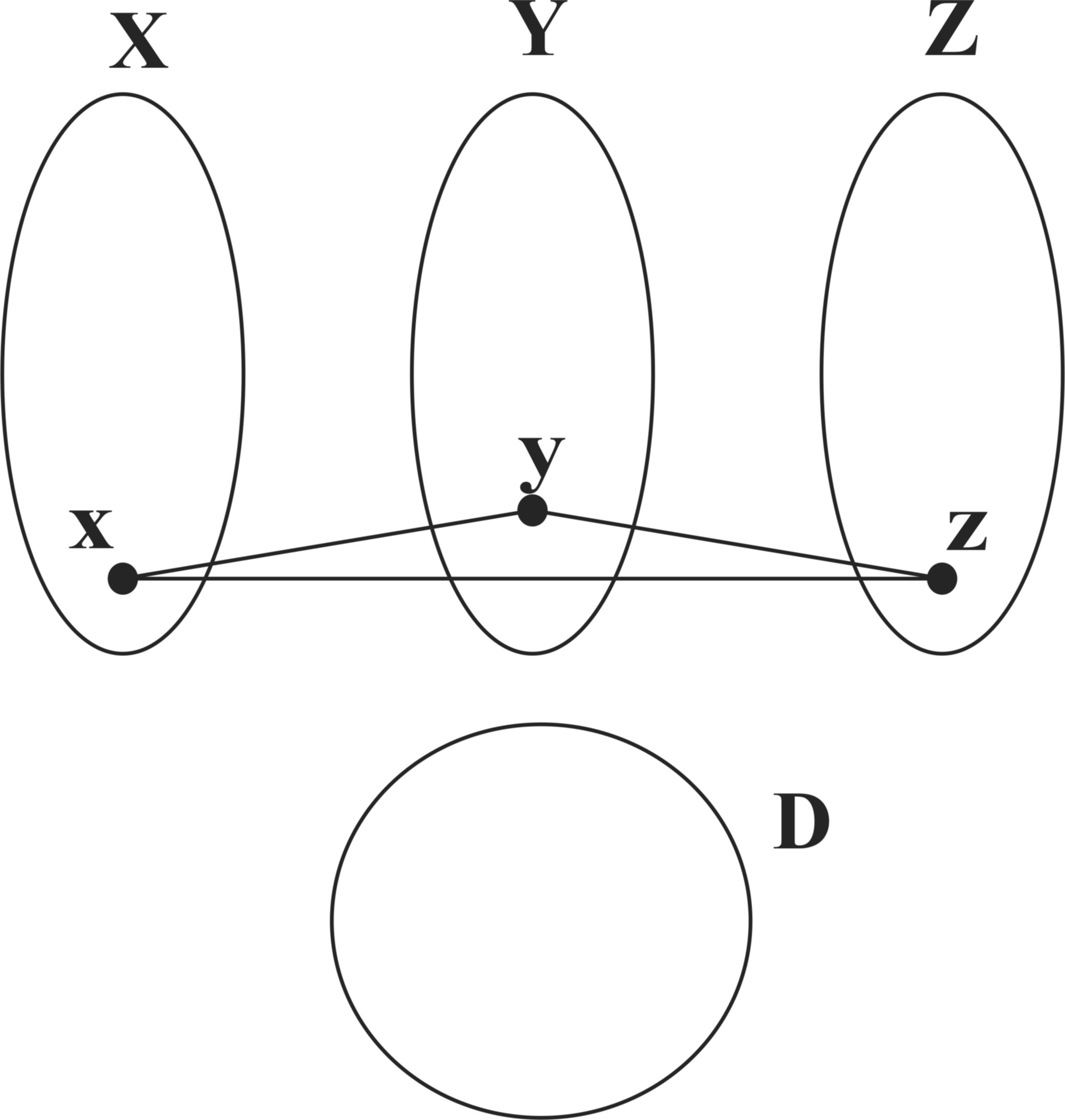}\\
\caption{The graph $G=K_{6n}$ containing a blue triangle  $T$.}\label{f2}
\end{figure}

\begin{lemma}\label{l4}
Suppose $G=K_{6n}$ is a $(F_n,K_4)$-free graph, where $n\geq{4}$. Then $\omega(G^R)=2n$.
\end{lemma}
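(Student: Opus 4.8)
The plan is to show $\omega(G^R)=2n$ by establishing the two inequalities $\omega(G^R)\le 2n$ and $\omega(G^R)\ge 2n$. The lower bound should be the easy direction: since $r(F_n,K_4)=6n+1$ by Theorem~\ref{t3}, a $(F_n,K_4)$-free coloring of $K_{6n}$ exists, but more to the point, I expect to argue that $\omega(G^R)$ cannot be too small. If $\omega(G^R)\le 2n-1$, then in particular $G^R$ contains no $K_{2n}$; I would try to derive a contradiction by a counting/independence-number argument — since $G$ has no blue $K_4$, by Ramsey-type considerations (or directly: the complement $G^B$ has independence number $\omega(G^R)\le 2n-1$, so $G^B$ needs many vertices relative to its clique number, and $K_4$-free graphs on $6n$ vertices with small independence number are impossible for $n\ge 4$). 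Concretely, a $K_4$-free graph on $N$ vertices has independence number at least roughly $\sqrt{N}$, which already exceeds $2n-1$ only for large $n$, so instead I would use the sharper route: Proposition~\ref{p1} forces every vertex to have $d^R(v)\ge 2n-1$, and then a greedy/extremal argument on the red graph (or invoking that $G^B$ being $K_4$-free with $\alpha(G^B)\le 2n-1$ on $6n$ vertices contradicts known Ramsey bounds $r(K_4,K_{2n})$) yields $\omega(G^R)\ge 2n$.

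For the upper bound $\omega(G^R)\le 2n$, suppose for contradiction that $G^R$ contains a red clique $K$ of order $2n+1$. Then $K$ already contains a red $F_n$: indeed, $F_n$ has $2n+1$ vertices and is a subgraph of $K_{2n+1}$. This is an immediate contradiction with $G$ being $(F_n,K_4)$-free. So the upper bound is in fact trivial, which suggests the real content of the lemma is the lower bound $\omega(G^R)\ge 2n$, and that Lemma~\ref{l2} (if $G^R$ contains $K_{2n}$ then it contains $3K_{2n}$) is meant to be combined with it downstream.

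Hence the crux is proving $G^R$ contains a $K_{2n}$. Here I would use Figure~\ref{f2}'s setup: pick a blue triangle $T$ (one exists, since $G^B$ is $K_4$-free but a $(F_n,K_4)$-free $K_{6n}$ cannot have $G^B$ triangle-free — otherwise $G^R$ would be $F_n$-free with independence number $\le 2$, impossible on $6n$ vertices for $n\ge 4$ by $r(F_n,K_3)=4n+1<6n$). For the three vertices $x,y,z$ of $T$, Proposition~\ref{p1} gives each $d^R\ge 2n-1$; since $G^B$ has no $K_4$, no vertex outside $T$ is blue-adjacent to all of $x,y,z$, so the red neighborhoods $N^R(x),N^R(y),N^R(z)$ cover almost all of $V(G)\setminus T$. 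A careful partition of the $6n-3$ outside vertices according to which of $x,y,z$ they are red-adjacent to, combined with the degree bounds $d^R\le 2n+1$, should force one of these red neighborhoods — or a union of two of them together with a triangle vertex — to contain $2n$ mutually red-adjacent vertices (using again that $G^B$ restricted to a red neighborhood has no $K_4$, hence no long blue path, pushing toward large red cliques as in the $C_5$/$C_7$ analysis). The main obstacle I anticipate is exactly this extremal step: squeezing a red $K_{2n}$ out of the three overlapping red neighborhoods of a blue triangle, keeping track of the at-most-$2n+1$ degree ceilings and the $K_4$-freeness of blue, without an off-by-one error. Once $K_{2n}\subseteq G^R$ is secured, $\omega(G^R)\ge 2n$ follows, and with the trivial upper bound we conclude $\omega(G^R)=2n$.
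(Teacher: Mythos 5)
Your upper bound is right and matches what the paper leaves implicit: a red $K_{2n+1}$ contains a red $F_n$, so $\omega(G^R)\le 2n$ is immediate and the entire content of the lemma is the lower bound. Your opening moves for that lower bound also coincide with the paper's: since $r(F_n,K_3)=4n+1\le 6n$ (Theorem \ref{t2}), a blue triangle $T=xyz$ exists, and the pairwise blue common neighborhoods $N^B(x)\cap N^B(y)$, etc., must be red cliques (else a blue $K_4$) of size at least $2n-5$ by the degree bounds of Proposition \ref{p1}. But at exactly the point you flag as ``the main obstacle,'' the proposal stops: you assert that a careful partition ``should force'' a red $K_{2n}$ without producing the argument, and the parenthetical fallbacks cannot work. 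Invoking $r(K_4,K_{2n})$ would require $6n\ge r(K_4,K_{2n})$, but already for $n=4$ one has $r(K_4,K_8)\ge r(K_4,K_5)=25>24=6n$, and the gap only widens as $n$ grows since $r(K_4,K_m)$ is superlinear in $m$; so a $K_4$-free graph on $6n$ vertices with independence number at most $2n-1$ is not excluded by any general Ramsey or independence-number bound, as you half-concede when noting the $\sqrt{N}$-type estimates are too weak.

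The paper's actual proof is a sustained case analysis occupying precisely the gap you left. It extends each common neighborhood by the third triangle vertex to obtain disjoint red cliques $X,Y,Z$ of order at least $2n-4$, then uses the $12$ leftover vertices $D$ (each triangle vertex has at least $6$ blue neighbors in $D$, so some pair among $x,y,z$ has two common blue neighbors there, which must attach redly to the opposite clique) to force $\omega(G^R)\ge 2n-2$. It then eliminates $\omega(G^R)=2n-2$ and $\omega(G^R)=2n-1$ separately, the latter splitting into five subcases according to the values $|N_D^B(u)\cap N_D^B(v)|$, each terminating in a forced red $F_n$ or blue $K_4$. This is not routine bookkeeping that can be waved through; it is the substance of the lemma, and the proposal does not supply it.
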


\begin{proof}{
Let $G=K_{6n}$ be a $(F_n,K_4)$-free graph such that $\omega(G^R)\leq{2n-1}$. Since $R(F_n,K_3)=4n+1$, $G$ has a blue $K_3$, say $T=xyz$. Since $d_{G^B}(s)\geq{4n-2}$ , for all $s\in{V(G)}$, we conclude that $d_{G-T}^{B}(v) \geq 4n-4$, for all $v \in V(T)$.
Note that $|N^B(u) \cap N^B(v)| \geq 2n-5=(4n-4)+(4n-4)-(6n-3)$, for $u,v \in V(T)$.
\\
 Let $N_{xy}=N^B(x) \cap N^B(y)$. Define $N_{xz}$ and $N_{yz}$ similarly. If there exists a blue edge in $N_{xy}$, then $G$ contains a blue $K_4$, a contradiction. Similar argument holds for $N_{yz}$ and $N_{xz}$. Therefore $N_{xy}$, $N_{xz}$ and $N_{yz}$ are red cliques of order at least $2n-5$. Note that $N_{xy}\cap N_{xz}=N_{xy}\cap N_{yz}=N_{xz} \cap N_{yz}=\emptyset$ and all the edges between $z$ and  $N_{xy}$ are red, otherwise $G$ contains a blue $K_4$. So, $Z=N_{xy}\cup\{z\}$ is a red clique of order at least $2n-4$. Thus, $\omega(G^R)\geq{2n-4}$. Similarly, $X=N_{yz}\cup\{x\}$ and $Y=N_{xz}\cup\{y\}$ are the red cliques of order at least $2n-4$. Let $X^\prime \subseteq X$ be a red clique of order $2n-4$ which contains $x$. Define $Y^\prime$ and $Z^\prime$ similarly (See Figure \ref{f2}). 
\\ 
Let $D=V(G)-(X^\prime \cup Y^\prime \cup Z^\prime)$, then $|D| = 12$. Note that all edges between $x$ and $Y' \cup Z'$ are blue and consequently $d_{D}^{B}(x) \geq 6=4n-2-(2n-4+2n-4)$. Similar argument holds for $y$ and $z$. Since $|D|=12$, there are two vertices in $V(T)$ which have at least two blue common neighbors in $D$. With no loss of generality, assume that $|N_{D}^B(x)\cap N_{D}^B(y)|\geq{2}$. One can easily see that all edges  between $N_{D}^B(x)\cap N_{D}^B(y)$ and $Z$ are red since $G$ contains no blue $K_4$.  Therefore $\omega(G^R)\geq{2n-2}$. So, we consider two cases:
\vspace{0.6em}
\begin{figure} 
\centering \includegraphics[width=70mm]{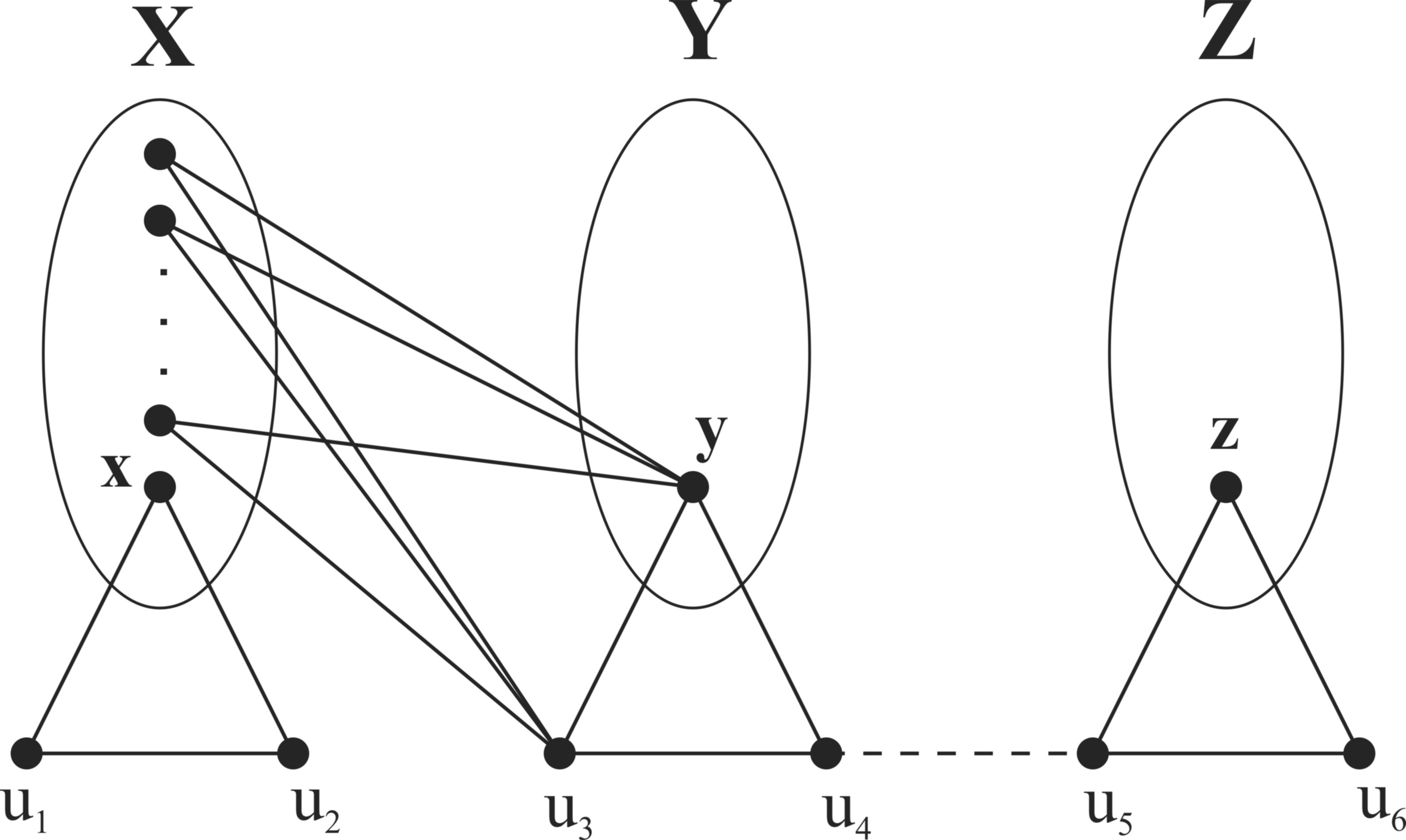}\\
\small{Figure 3}
\end{figure}
\\
\textbf{Case 1:} $\omega(G^R)=2n-2$.
\\
Since $\omega(G^R)=2n-2$ and $|N_{D}^B(s)|\geq{6}$, for all $s\in V(T)$, we conclude that $|N_{D}^B(x)\cap N_{D}^B(y)|=|N_{D}^B(x)\cap N_{D}^B(z)|=|N_{D}^B(y)\cap N_{D}^B(z)|=2$.
\\
Notice that $|X|=|Y|=|Z|=2n-2$. Let $D^\prime\subseteq{D}$ be the remaining $6$ vertices which each one of them is not the common neighbor of any two vertices of $x,y$ and $z$ in $G^B$. Let $D^\prime = \{u_1, \ldots, u_6\}$ such that $N^{B}_{D^\prime}(x)=\{u_1, u_2\}$, $N^{B}_{D^\prime}(y)=\{u_3, u_4\}$ and $N^{B}_{D^\prime}(z)=\{u_5, u_6\}$. 
\\
Assume that $u_1u_2$ is red. Then all edges between two sets $\{u_5, u_6\}$ and $Y-\{y\}$ are blue. Otherwise if there exists a red edge , for instance $u_5v$, where $v\in Y-\{y\}$, then $Y \cup \{u_1, u_2, u_5\}$ induces a red $F_n$ with the center $y$. Now, $\{u_5, u_6, v, z\}$ induces a blue $K_4$, where $v \in Y-\{y\}$. 
\\
So, we may assume that $u_1u_2$, $u_3u_4$ and $u_5u_6$ are blue. Note that since $G$ contains no blue $K_4$, there exists a red edge in $G[\{u_3, u_4, u_5, u_6\}]$, say $u_4u_5$. All edges between $u_3$ and $X-\{x\}$ are blue. Otherwise, if there exists a red edge $u_3v$, where $v \in X- \{x\}$, then $X \cup \{u_3, u_4, u_5\}$ induces a red $F_n$ with the center $x$, a contradiction (See Figure 3). 
\\
Now, all edges between $u_4$ and $X$ are red. Otherwise, if there exists a blue edge, for instance $u_4v$, where $v\in X-\{x\}$, then $\{u_3, u_4, v, y\}$ induces a blue $K_4$, a contradiction. But now $X \cup \{u_4\}$ induces a red clique of order $2n-1$, a contradiction.
\vspace{0.6em}
\\
\textbf{Case 2:} $\omega(G^R)=2n-1$.
\\
In this case, $|N_{D}^B(u)\cap N_{D}^B(v)|=3$, for some 2-subset $\{u,v\} \subseteq V(T)$. We divide this case into some subcases:
\vspace*{0.4em}
\\
\textbf{Subcase 2.1:} $|N_{D}^B(u)\cap N_{D}^B(v)|=3$, for every 2-subset $\{u,v\} \subseteq V(T)$.
\\
In this subcase we have $|X|=|Y|=|Z|=2n-1$. Let $D^\prime\subseteq{D}$ be the remaining $3$ vertices which each one of them is not the common neighbor of any two vertices of $x,y$ and $z$ in $G^B$. The vertex $v\in{D^\prime}$ has at least one blue edge to $X$, $Y$ and $Z$ since $\omega(G^R)=2n-1$. Suppose that $uv$ is a blue edge such that $u\in{X}$. 
Notice that $d^{B}_{Y\cup Z}(v) \geq 2n-3=(4n-2)-2-(2n-1)$ since $d_{G^B}(v)\geq{4n-2}$. Let $L_v=N_{Y\cup Z}^B(v)$, then $|L_v|\geq{2n-3}$. Since $G$ contains no red $F_n$, we conclude $d_{Y}^{R}(u) , d_{Z}^{R}(u) \leq 1$. Thus, $d_{L_v}^{B}(u) \geq 2n-5$. Let $L_u=N_{Y\cup Z}^B(u)\cap L_v$. Then $|L_u|\geq{2n-5}$ and $L_u$ induces a red clique, otherwise $\{u,v\}\cup L_u$ induces a blue $K_4$. Now, $Y\cup L_u$ or $Z\cup L_u$ induces a red $F_n$ since $n\geq{4}$, a contradiction.
\vspace{0.4em}
\\ 
\textbf{Subcase 2.2:} 
\\
$|N_{D}^B(x)\cap N_{D}^B(y)|=3$, $|N_{D}^B(x)\cap N_{D}^B(z)|=2$, $|N_{D}^B(y)\cap N_{D}^B(z)|=1$.
\\
 In this subcase, we have $|X|=2n-3$, $|Y|=2n-2$, $|Z|=2n-1$. Let $D^\prime\subseteq{D}$ be the remaining $6$ vertices which each one of them is not the common neighbor of any two vertices of $x,y$ and $z$ in $G^B$.  Also, let $D^\prime=\{u_1, u_2, \ldots, u_6\}$. We have $d_{D^\prime}^B(x)=1$, $d_{D^\prime}^B(y)=2$ and $d_{D^\prime}^B(z)=3$ since $d_{G^B}(s)\geq{4n-2}$, for all $s\in{V(G)}$. With no loss of generality, assume that $N_{D^\prime}^B(x)=\{u_1\}$, $N_{D^\prime}^B(y)=\{u_2,u_3\}$ and $N_{D^\prime}^B(z)=\{u_4,u_5,u_6\}$. One of the edges $u_4u_5$, $u_5u_6$ and $u_4u_6$ is red, otherwise $\{z,u_4,u_5,u_6\}$ induces a blue $K_4$. With no loss of generality, suppose that $u_5u_6$ is red. 
\\ 
Note that $\{u_1, u_2, u_3\}$ induces a blue $K_3$, otherwise there exists a red $F_n$ with center $z$. Also, $u_iu_4$ is blue, for $i=1,2,3$. If $u_1u_4$ is red, then $Y \cup \{u_1,u_4,u_5,u_6\}$ induces a red $F_n$, a contradiction. Also, if $u_iu_4$ is red, for $i=2$ or $i=3$, then $\{u_i, u_4, u_5, u_6\}$ induces a red $F_n$, a contradiction. Now, $\{u_1, u_2, u_3, u_4\}$ induces a blue $K_4$ which is a contradiction.  
\vspace{0.4em} 
\\
\textbf{Subcase 2.3:}
\\
$|N_{D}^B(x)\cap N_{D}^B(y)|=|N_{D}^B(x)\cap N_{D}^B(z)|=3$, $|N_{D}^B(y)\cap N_{D}^B(z)|=0$.
\\
In this subcase, we have $|X|=2n-4$, $|Y|=2n-1$, $|Z|=2n-1$. Let $D^\prime\subseteq{D}$ be the remaining $6$ vertices which each one of them is not the common neighbor of any two vertices of $x,y$ and $z$ in $G^B$. Also, let $D^\prime=\{u_1, u_2, \ldots, u_6\}$. We have $d_{D^\prime}^B(x)=0$, $d_{D^\prime}^B(y)=3$ and $d_{D^\prime}^B(z)=3$ since $d_{G^B}(s)\geq{4n-2}$, for all $s\in{V(G)}$. With no loss of generality, suppose that $N_{D^\prime}^B(y)=\{u_1,u_2,u_3\}$.  Note that there exists a red edge $u_iu_j$, where $1 \leq i<j \leq 3$, otherwise $\{y, u_1, u_2, u_3\}$ induces a blue $K_4$. Now, $Z \cup \{u_i, u_j\}$ induces a red $F_n$ with the center $z$, a contradiction.
\vspace{0.4em}
\\
\textbf{Subcase 2.4:}
\\
$|N_{D}^B(x)\cap N_{D}^B(y)|=|N_{D}^B(x)\cap N_{D}^B(z)|=3$, $|N_{D}^B(y)\cap N_{D}^B(z)|=1$.
\\
In this subcase, we have $|X|=2n-3$, $|Y|=2n-1$, $|Z|=2n-1$. Let $D^\prime\subseteq{D}$ be the remaining $5$ vertices which each one of them is not the common neighbor of any two vertices of $x,y$ and $z$ in $G^B$. Also, let $D^\prime=\{u_1, u_2, \ldots, u_5\}$. We have $d_{D^\prime}^B(y) \geq 2$ and $d_{D^\prime}^B(z) \geq 2$ since $d_{G^B}(s)\geq{4n-2}$, for all $s\in{V(G)}$. With no loss of generality, suppose that $\{u_1,u_2\}\subseteq N_{D^\prime}^B(y)$ and $\{u_3,u_4\}\subseteq N_{D^\prime}^B(z)$. 
\\
Suppose that $u_5 \in N^{B}(z)$. Then $G[\{u_3, u_4, u_5\}]$ has a red edge, say $u_4u_5$, otherwise $\{z, u_3, u_4, u_5\}$ induces a blue $K_4$. Now, $Y \cup \{u_4, u_5\}$ induces a red $F_n$, a contradiction. Similar argument holds for $y$.
\\
So, we may assume that $u_5 \in N^{R}(y) \cap N^{R}(z)$. Note that $u_4u_5$ is blue, otherwise $Y \cup \{u_4, u_5\}$ induces a red $F_n$ with the center $y$. Similarly, $u_iu_5$ is blue, for $i=1,2,3$.
\\
$u_1u_2$ is blue, otherwise $Z \cup \{u_1,u_2\}$ induces a red $F_n$ with the center $z$. Similarly, $u_3u_4$ is blue. Since $G$ contains no blue $K_4$, we may assume that $u_2u_3$ is red. Then $u_1u_4$ is blue, otherwise $X \cup \{u_1, u_2, u_3, u_4\}$ induces a red $F_n$. Also, note that $u_1u_3$ or $u_2u_4$ is blue, otherwise $X \cup \{u_1, u_2, u_3, u_4\}$ induces a red $F_n$. Suppose that $u_1u_3$ is blue. Now, $\{u_1, u_3, u_4, u_5\}$ induces a blue $K_4$, a contradiction.
\vspace{0.4em}
\\
\textbf{Subcase 2.5:}
\\
$|N_D^B(x)\cap N_D^B(y)|=|N_D^B(x)\cap N_D^B(z)|=3$, $|N_D^B(y)\cap N_D^B(z)|=2$.
\\
In this subcase, we have $|X|=2n-2$, $|Y|=2n-1$, $|Z|=2n-1$. Let $D^\prime\subseteq{D}$ be the remaining $4$ vertices which each one of them is not the common neighbor of any two vertices of $x,y$ and $z$ in $G^B$. Also, let $D^\prime=\{u_1, u_2, u_3, u_4\}$. We have $d_{D^\prime}^B(x) \geq {0}$, $d_{D^\prime}^B(y) \geq {1}$ and $d_{D^\prime}^B(z) \geq {1}$. With no loss of generality, suppose that $u_1\in N_{D^\prime}^B(y)$ and $u_2 \in N_{D^\prime}^B(z)$. The vertex $u_1$ has at least one blue edge to $Z$ since $\omega(G^R)=2n-1$. 
\\
Suppose that $u_1$ has a blue edge to $X$.
 By the fact that $d_{G^B}(u_1)\geq{4n-2}$, the vertex $u_1$ has at least $2n-4=4n-2-(2n-1+3)$ blue edges to $X\cup Z$. Let $L_{u_1}=N_{X\cup Z}^B(u_1)$. Then $|L_{u_1}|\geq{2n-4}$ and $L_{u_1}$ induces a red clique, otherwise $\{u_1,y\}\cup L_{u_1}$ induces a blue $K_4$. Now, $X\cup L_{u_1}$ or $Z\cup L_{u_1}$ induces a red $F_n$ since $n\geq{4}$, a contradiction.
\\ 
Now, suppose that all edges between $u_1$ and $X$ are red. Let $X^\prime=X\cup \{u_1\}$. Clearly, $X^\prime$ is a red clique of order $2n-1$. Since $\omega(G^R)=2n-1$, $u_2$ has some blue edges to $X^\prime$ and to $Y$. The vertex $u_2$ has at least $2n-3=4n-2-(2n-1+2)$ blue edges to $X^\prime \cup Y$. Let $L_{u_2}=N_{X^\prime \cup Y}^B(u_2)$. Then $|L_{u_2}|\geq{2n-3}$ and $L_{u_2}$ induces a red clique, otherwise $\{z,u_2\}\cup L_{u_2}$ induces a blue $K_4$, which is a contradiction. But, $X^\prime \cup L_{u_2}$ or $Y\cup L_{u_2}$ induces a red $F_n$ since $n\geq{4}$, a contradiction. 
 }\end{proof}
\begin{definition}\label{d} Let $n\geq{2}$ be an integer and $r=r(F_n, K_4)=6n + 1$. Define a class $\mathfrak{g}$ of graphs as the family of $G_1$ and $G_2$. Also, every graph in $\mathfrak{g}$ shows a red/blue edge-coloring of $K_{r-1}$ as
\\$G_1: G^{R}_1=3K_{2n} \qquad\qquad\, ,\qquad\qquad\qquad\,\,\,\, G^{B}_1=K_{2n,2n,2n}$
\\\\$G_2: G^{R}_2=3K_{2n}\cup (I_1\cup I_2\cup I_3) \qquad,\qquad G^{B}_2=K_{2n,2n,2n}-(I_1\cup I_2 \cup I_3)$
\\\\ such that if $K_{2n,2n,2n}=(A_1,A_2,A_3)$, then $I_i$  is a collection of $k_i$ independent edges between $A_i$ and $A_{i+1}$, for $i=1,2$ and $I_3$ is a collection of $k_3$ independent edges between $A_3$ and $A_1$, where $1\leq{k_i}\leq{2n}$, for $i=1,2,3$ and $G_2[I_1\cup I_2\cup I_3]$ contains no red triangle.
\end{definition}

\textbf{Proof of the Theorem \ref{t1}:} 
 Let $G=K_{6n}$ be a $(F_n,K_4)$-free graph. By Lemma \ref{l4}, $G$ contains a red $K_{2n}$. Thus, by Lemma \ref{l2}, $G$ contains a red $3K_{2n}$. 
One can easily find out that these two kind of colorings in the Definition \ref{d} are the only $(F_n,K_4)$-free colorings when $G$ contains a red $3K_{2n}$. \\ 
 We denote these red $K_{2n}$s by $G_i$, for $i=1,2,3$. Let $w$ be a new vertex and $G^\prime$ be a graph obtaining from $G$ by adding $w$ and some colored edges between $w$ and $G$. We want to find an upper bound for the most number of colored edges between $w$ and $G$  such that  $G^\prime$ still is a $(F_n,K_4)$-free graph. It is clear that $w$ has at most one red edge to $G_i$, for $i=1,2,3$, since $G^\prime$ has no red $F_n$.
 
 Suppose that $d_{G_i}^B(w)\geq 1$, $d_{G_j}^B(w)\geq 2$ and $d_{G_k}^B(w)\geq 3$, where $\{i,j,k\}=\{1,2,3\}$. With no loss of generality, let  $\{u_1\}\subseteq N_{G_i}^B(w)$, $\{u_2,u_3\}\subseteq  N_{G_j}^B(w)$ and $\{u_4,u_5,u_6\}\subseteq  N_{G_k}^B(w)$. Note that $d_{G_j}^R(u_1)\leq 1$ since $G$ contains no red $F_n$. So, we may assume that $u_1u_2$ is a blue edge. With the similar argument and with no loss of generality, suppose that $u_2u_4$ and $u_2u_5$ are blue edges. Now, $\{u_1,u_2,u_4,u_5\}$ contains a blue triangle.
 Hence $G^\prime$ contains a blue $K_4$, a contradiction.  
 
Now, suppose that $d_{G_i}^B(w)\geq 2$, for $i=1,2,3$.  With no loss of generality, let  $\{u_1,u_2\}\subseteq N_{G_1}^B(w)$, $\{u_3,u_4\}\subseteq  N_{G_2}^B(w)$ and $\{u_5,u_6\}\subseteq  N_{G_3}^B(w)$. One of the edges $u_1u_5$ or $u_1u_6$ is blue, say $u_1u_5$. If $u_1u_3$ and $u_1u_4$ are both blue, then $u_3u_5$ and $u_4u_5$ are red, otherwise $G^\prime$ contains a blue $K_4$. But now, $G$ contains a red $F_n$ with the center $u_5$, a contradiction. So, we may assume that $u_1u_3$, $u_2u_4$, $u_3u_5$ and $u_4u_6$ are red. Since $G$ contains no red $F_n$, $u_1u_5$ is blue. Note that $u_1u_4$ and $u_4u_5$ are blue. But now, $\{u_1,u_4,u_5,w\}$ induces a blue $K_4$, which is a contradiction.  

Now, we consider two cases:
\vspace{.6em}
\\
 \textbf{Case1:} $d_{G_i}^B(w)\geq 1$, for $i=1,2,3$.
 \vspace{.4em}
 \\
 With no loss of generality, according to what is mentioned above, assume that $d_{G_1}^B(w)=1$. If $d_{G_2}^B(w)=2$, then $d_{G_3}^B(w)\leq 2$ and hence $d(w)\leq 8$ (Since $d^R(w)\leq 3$). Now, if $d_{G_i}^B(w)=1$, for $i\in\{2,3\}$, then $d(w)\leq 2n+4$ (Since $d_{G_1\cup G_2}^R(w)\leq 2$).
 Hence in this case we have $d(w)\leq 2n+4$.
\vspace{.6em}
\\
 \textbf{Case2:} $d_{G_i}^B(w)=0$, for some $i\in\{1,2,3\}$. 
 \vspace{.4em}
 \\
 Let $i=1$. Since $d_{G_1}^R(w)\leq 1$, one can easily obtain that  $d(w)\leq 4n+1$ (at most $4n$ edges to $G_2\cup G_3$ and at most one red edge to $G_1$).
\vspace{.6em}
\\
These two cases imply that if $d(w)\geq{4n+2}$, then $G^\prime$ contains a red $F_n$ or a blue $K_4$.
Hence $r_*(F_n,K_4)\leq{4n+2}$.

 One can easily find out that if $w$ has degree $4n+1$ such that $4n$ blue edges are between $w$ and $G_1\cup G_2$ and one red edge is between $w$ and $G_3$, then $G^\prime$ is a $(F_n,K_4)$-free graph. Therefore,  $r_*(F_n,K_4)\geq{4n+2}$, and consequently $r_*(F_n,K_4)=4n+2$.  



\bibliographystyle{plain} 
\bibliography{mybib}{}

\end{document}